\newtheorem{lemma}{Lemma}
\newtheorem{proposition}{Proposition}
\theoremstyle{definition}
\newtheorem{remark}{Remark}
\newcommand\eps{\ensuremath\varepsilon}
\title[QSSA in open Michaelis--Menten system]{On the quasi-steady-state approximation in an open Michaelis--Menten reaction mechanism}
\author[J. Eilertsen, M. R. Roussel, S. Schnell and S. Walcher]{}
\subjclass{Primary: 92C45; Secondary: 34N05, 34C45.}
 \keywords{Singular perturbation, slow manifold, quasi-steady state, Michaelis--Menten mechanism, critical manifold, Gronwall lemma, Poincar{\'e} sphere.}
 \email{jueilert@umich.edu}
 \email{roussel@uleth.ca}
 \email{schnells@umich.edu}
 \email{walcher@matha.rwth-aachen.de}
\thanks{JE is supported by the University of Michigan Postdoctoral Pediatric Endocrinology and Diabetes Training Program ``Developmental Origins of Metabolic Disorder'' (NIH/NIDDK Grant: K12 DK071212). MRR's research is supported by the Natural Sciences and Engineering Research Council of Canada. The research of SW is supported by the bilateral project ANR-17-CE40-0036 and DFG-391322026 SYMBIONT}
\thanks{$^*$ Corresponding author: Sebastian Walcher ({\em E-mail address}: walcher@matha.rwth-aachen.de).}
\begin{document}
\maketitle

\centerline{\scshape Justin Eilertsen}
\medskip
{\footnotesize
\centerline{Department of Molecular \& Integrative Physiology}
\centerline{University of Michigan Medical School}
\centerline{Ann Arbor, Michigan 49109, USA}
} 

\medskip

\centerline{\scshape Marc R. Roussel}
\medskip
{\footnotesize
 \centerline{Alberta RNA Research and Training Institute}
   \centerline{Department of Chemistry and Biochemistry}
   \centerline{University of Lethbridge}
   \centerline{Lethbridge, Alberta, Canada, T1K 3M4}
}

\medskip

\centerline{\scshape Santiago Schnell}
\medskip
{\footnotesize
\centerline{Department of Molecular \& Integrative Physiology}
\centerline{Department of Computational Medicine \& Bioinformatics}
\centerline{University of Michigan Medical School}
\centerline{Ann Arbor, Michigan 49109, USA}
}

\medskip

\centerline{\scshape Sebastian Walcher}
\medskip
{\footnotesize
\centerline{Mathematik A}
\centerline{RWTH Aachen}
\centerline{D-52056 Aachen, Germany}
}

\bigskip

 \centerline{(Communicated by the associate editor name)}

\begin{abstract}
The conditions for the validity of the standard quasi-steady-state approximation in the Michaelis--Menten mechanism in a closed reaction vessel have been well studied, but much less so the conditions for the validity of this approximation for the system with substrate inflow. We analyze quasi-steady-state scenarios for the open system attributable to singular perturbations, as well as less restrictive conditions. For both settings we obtain distinguished invariant slow manifolds and time scale estimates, and we highlight the special role of singular perturbation parameters in higher order approximations of slow manifolds. We close the paper with a discussion of distinguished invariant manifolds in the global phase portrait.
\end{abstract}

\section{Introduction}\label{S:into}
Cellular function involves a large network of transformations of substrates, denoted~S, into products,~P, which in turn may be further transformed, eliminated, or cycled back into a useful form.
While the chemical conversion of S into P can occur spontaneously
\begin{align}\label{spon}
    \ce{S  ->[$k$]  P},
\end{align}
the rate constant, $k$, that regulates the speed of the reaction~(\ref{spon}) will often be very small, so that spontaneous conversion is too slow to sustain life. Moreover, spontaneous conversion allows only the crudest forms of control.  Consequently, the reaction must be \textit{catalyzed} or ``sped up.'' Enzymes, denoted~E, are biochemical catalysts that accelerate the conversion of S into P, and the chemical process by which the conversion of a substrate molecule into a product molecule is accelerated by an enzyme is called an \textit{enzymatic reaction}. 

The simplest description of an enzymatic reaction for a single-substrate, single-product reaction is the Michaelis--Menten mechanism~\cite{Henri1902,MM1913,Wurtz1880},
\begin{align}\label{mmcsd}
    \ce{S + E <=>[$k_1$][$k_{-1}$] C ->[$k_2$] E + P},
\end{align}
in which the conversion of S into P is achieved via two elementary reactions: the reversible formation of the enzyme-substrate complex,~C,  and the conversion of S to P in the complex~C with (in this simple model) simultaneous disassociation into E and~P. Enzymes lower the free-energy barrier separating reactants from products, with the result that (\ref{mmcsd}) is generally faster than~(\ref{spon}) by many orders of magnitude \cite[Section~6.2]{NC2008}.

The modeling and quantification of enzymatic reaction rates is of particular importance, especially since metabolic disease and dysfunction may arise when these reactions are too slow due, e.g., to a mutation in the corresponding gene. At or near the thermodynamic limit, enzymatic reactions are modeled by nonlinear ordinary differential equations (ODEs), known as rate equations, that obey the law of mass action. While the nonlinear terms in the model equations of enzymatic reactions make the mathematical treatment of the reaction mechanism challenging, avenues for simplification often exist. Specifically, if the rates of the elementary reactions that comprise the catalytic reaction are disproportionate, the ODE model will be \textit{multiscale}, meaning the complete reaction will consist of disparate slow and fast time scales. Under the influence of distinct fast and slow time scales, the rate of change of $c$ (using lower-case italic letters to represent the concentrations of the corresponding species) is very small relative to the rate of rate of change of~$s$. The exploitation of this almost negligible rate of change warrants a simplification of the form
\begin{align}\label{Qred}
    \ce{S  ->[$k_{\text{eff}}$]  P},
\end{align}
where $k_{\text{eff}}$ is the \textit{effective}---but non-elementary---rate function.
In the case of the Michaelis--Menten mechanism, $k_\text{eff}$ is a hyperbola in the variable~$s$; in more complicated mechanisms it may adopt the form of, for instance, a Hill-type function. The advantage offered by (\ref{Qred}) is that the entire reaction is describable in terms of the reactant concentration,~$s$, since the explicit dependence on $e$ and $c$ has been eliminated. The most widely studied example of this kind of reduction is probably the Michaelis--Menten rate law, also known as the standard quasi-steady-state approximation (sQSSA). More generally, rate laws of the form~(\ref{Qred}) are referred to as quasi-steady-state (QSS) reductions or quasi-steady-state approximations (QSSA). The term QSS speaks to the fact that the concentration of at least one chemical species (typically an intermediate) changes very slowly for the majority of the reaction. In fact, the rate of change is so small that it is \textit{nearly zero} (steady-state) but not quite; hence the expression \textit{quasi}-steady-state. 

The principal value of QSS approximations is that they yield a reduction of dimension \cite{2006-Flach-IPSB}.
In the biochemical arena, initially, the related equilibrium approximation was justified via biochemical arguments by Henri~\cite{Henri1902} and by Michaelis and Menten~\cite{MM1913}. Briggs and Haldane~\cite{BH1925} later provided a mathematical justification of the sQSSA using an argument that hints at later singular perturbation treatments but lacked formal justification. Only the development of singular perturbation theory some decades later (with seminal contributions by Tikhonov~\cite{Tikhonov1952}, and later Fenichel~\cite{Fenichel1979}) laid a solid mathematical foundation, which was used by Heineken et~al.~\cite{Heineken1967} to develop criteria for the validity of the sQSSA for the closed Michaelis--Menten system. This history was paralleled in inorganic chemistry, with the initial development of the sQSSA based on ad hoc chemical reasoning~\cite{Bodenstein1913,CU1913}, followed eventually by more rigorous treatments based on singular perturbation theory~\cite{BAO1963}.

Singular perturbation theory in this context applies to ODEs that depend on a small nonnegative parameter $\varepsilon$, and admit non-isolated stationary points at $\varepsilon=0$. In practice, e.g.\ for systems with polynomial or rational right-hand side, the set of stationary points then contains a submanifold of positive dimension, which is called a critical manifold. Given appropriate conditions (see Appendix \ref{AppendA} for details), one obtains a reduction to a system of smaller dimension constrained to evolve on the critical manifold. The challenge in any application of Fenichel theory resides in finding a small parameter from a given parameter dependent system. Traditional analyses of enzymatic reactions rely heavily on scaling and non-dimensionalization in order to transform the model equations into a standard form, and the utility of scaling analysis is that the small parameter often emerges naturally from the dimensionless equations \cite{Segel1989}. A different, more recent, approach~\cite{Goeke2015} starts with determining so-called Tikhonov--Fenichel parameter values (TFPV), by searching for parameter combinations at which the system admits non-isolated stationary points, and satisfies further technical conditions (see Section~\ref{TFPVsubs}).  From such (dimensional) TFPV one then obtains singular perturbation reductions via small perturbations along a curve in parameter space. In chemical applications, critical manifolds may emerge when specific system parameters (such as rate constants) vanish. 

While singular perturbation theory provides a very satisfactory toolbox for reduction of chemical reaction networks, examples from the literature indicate that the approach may be too narrow for some applications. Thus in some scenarios, at a certain parameter value there exists a distinguished invariant manifold which is, however, not comprised of stationary points.
 Formally, this means that a QSS reduction which approximates the system when $0<\eps\ll 1$ is not attributable to Fenichel theory. Nevertheless the QSS reduction is still sometimes a good approximation to the full system when Fenichel theory is inapplicable, and this raises several important questions. First, given the lack of a critical manifold and a fixed reduction procedure, how does one justify a QSS reduction, and how does one go about quantifying its efficacy? Second, if Fenichel theory is not applicable but a QSS reduction still proves to be an accurate approximation, will there be a distinguished invariant slow manifold that attracts nearby trajectories? In other words, what phase-space structures make the QSS reduction possible in situations where Fenichel theory is extraneous? In the present paper we contribute, on the one hand, to answering these questions for an open Michaelis--Menten system with constant substrate influx. On the other hand we provide sharper estimates for the accuracy of the sQSSA in singular perturbation scenarios. Finally, we consider distinguished invariant manifolds from a global perspective for the system on the Poincar\'e sphere.

\section{An open Michaelis--Menten reaction mechanism}

The open Michaelis--Menten reaction mechanism we consider here is the classical Michaelis--Menten reaction mechanism with a constant influx of substrate,~S, at a rate~$k_0$:
\begin{align}\label{mm1}
    \ce{->[$k_0$] S},\qquad\ce{S + E <=>[$k_1$][$k_{-1}$] C ->[$k_2$] E + P},
\end{align}
where $k_0$, $k_1$, $k_{-1}$ and $k_2$ are rate constants. 

Mathematical models for (\ref{mm1}) come in both deterministic and stochastic forms. Here we consider only the deterministic ODE model that follows the law of mass action near the thermodynamic limit. For a thorough analysis of the chemical master equation corresponding to (\ref{mm1}), we invite the reader to consult \cite{Othmer2020,Thomas2011}.

Let $s$, $e$, $c$ and $p$ denote the concentrations of S, E, C and P, respectively. The mass action model corresponding to~(\ref{mm1}) is given by the following set of nonlinear ODEs:
\begin{subequations}\label{MA}
\begin{align}
\dot{s} &= k_0 -k_1es + k_{-1}c,\label{sdot}\\
\dot{c} &= k_1es - (k_{-1}+k_2)c,\label{cdot}\\
\dot{e} &= -k_1es + (k_{-1}+k_2)c,\label{edot}\\
\dot{p} &= k_2c\label{pdot},
\end{align}
\end{subequations}
where ``$\dot{\phantom{x}}$'' denotes differentiation with respect to time. Summing equations~(\ref{cdot}) and~(\ref{edot}) reveals the conservation law
\begin{equation}\label{econ}
    c+e=e_T,
\end{equation}
where $e_T$ denotes the total enzyme concentration. Employing (\ref{econ}) to eliminate~(\ref{edot}), and noting that (\ref{pdot}) is not coupled to (\ref{sdot}) or~(\ref{cdot}), yields the simplified model
\begin{equation}\label{mmo}
\begin{array}{rcl}
\dot s&=& k_0-k_1(e_T-c)s + k_{-1}c,\\
\dot c&=& k_1(e_T-c)s -(k_{-1}+k_2)c,
\end{array}
\end{equation}
from which the the time dependence of $p$ and $e$ are readily obtained from (\ref{pdot}) and (\ref{econ}) once the solution to (\ref{mmo}) is procured. 

In contrast, the mass action system for the \textit{closed} Michaelis--Menten reaction mechanism is recovered by setting $k_0=0$:
\begin{subequations}\label{mmc}
\begin{align}
\dot s&= -k_1(e_T-c)s + k_{-1}c,\label{MMclosed_sdot}\\
\dot c&= k_1(e_T-c)s -(k_{-1}+k_2)c.\label{MMclosed_cdot}
\end{align}
\end{subequations}
One distinguishing difference between the open and closed system is that the total substrate concentration, $s_T$, is a conserved quantity when the reaction is closed. Therefore, (\ref{MA}) with $k_0=0$ is equipped with the additional conservation law $s_T=s+c+p$, whereas with $k_0>0$ one has only one conservation law,~(\ref{econ}). 

It is well known that further simplification of (\ref{mmc}) is possible via a QSS reduction. The most common reduction is the sQSSA, in which (\ref{mmc}) is approximated with a differential-algebraic equation consisting of the algebraic equation obtained by setting the right-hand side of equation~(\ref{MMclosed_cdot}) equal to zero (``$\dot{c}=0$'') along with the differential equation~(\ref{MMclosed_sdot}). This reduces to the single differential equation
\begin{subequations}\label{csQSSA}
\begin{align}
    \dot{s} &= - \cfrac{k_2e_Ts}{K_M+s}, \quad K_M :=\cfrac{k_{-1}+k_2}{k_1},\\
    c &= \cfrac{e_Ts}{K_M+s},\label{eq:c_of_s}
    \end{align}
\end{subequations}
where $K_M$ is the Michaelis constant.

The legitimacy of the sQSSA (\ref{csQSSA}) for the closed Michaelis--Menten reaction mechanism (\ref{mmc}) is well-understood. Following an early effort by Briggs and Haldane~\cite{BH1925}, Heineken, Tsuchiya, and Aris \cite{Heineken1967} were perhaps the first to prove with some degree of rigor that (\ref{csQSSA}) is valid provided $e_T \ll s_0$. The qualifier, $e_T\ll s_0$, was justified via singular perturbation analysis. Defining $\bar{s}:=s/s_0$, $\bar{c}:=c/e_T$, and $T:=k_1e_Tt$ generates the singularly perturbed dimensionless form of (\ref{mmc})
\begin{subequations}
\begin{align}
\bar{s}' &= -\bar{s}+\bar{c}(\bar{s}+\kappa-\lambda)\\
\mu\bar{c}'&= \bar{s}-\bar{c}(\bar{s}+\kappa),
\end{align}
\end{subequations}
where prime denotes differentiation with respect to $T$, $\lambda:=k_2/k_1s_0$, $\kappa:=K_M/s_0$, and $\mu:=e_T/s_0$. Consequently, the sQSSA (\ref{csQSSA}) is justified via Tikhonov's theorem~\cite{Tikhonov1952}. Throughout the years, refinements and variations of the condition $\mu\ll 1$ have been made. Perhaps most famously, Segel \cite{Segel1988} and Segel and Slemrod \cite{Segel1989} extended the results of Heineken et al.~\cite{Heineken1967} and demonstrated that (\ref{csQSSA}) is valid whenever $e_T\ll K_M +s_0$. Embedded in Segel's estimate is the more restrictive condition, $e_T\ll K_M$, which is independent of the initial substrate concentration, and is nowadays the almost universally accepted qualifier that justifies (\ref{csQSSA}) \cite{EILERTSEN2020}.

While the QSS reductions of the closed Michaelis--Menten reaction are well-studied, analyses pertaining to the validity of the QSSA in open reaction environments are somewhat sparse \cite{Othmer2020,GAO2011,Stoleriu2004,Thomas2011}. The question we address is therefore: when is further reduction of (\ref{mmo}) possible? The trajectories illustrated in Figure~\ref{fig:traj} show that there are certainly conditions under which the QSSA estimate of the  enzyme-substrate complex, given by equation~(\ref{eq:c_of_s}) which applies equally to the open system, is close to a slow invariant manifold, i.e.\ an invariant manifold (here, a trajectory) that attracts nearby trajectories and along which the equilibrium point is eventually approached from almost all initial conditions~\cite{Fraser1988,GK2003,RF91a}.
\begin{figure}
    \centering
    \includegraphics[scale=0.8]{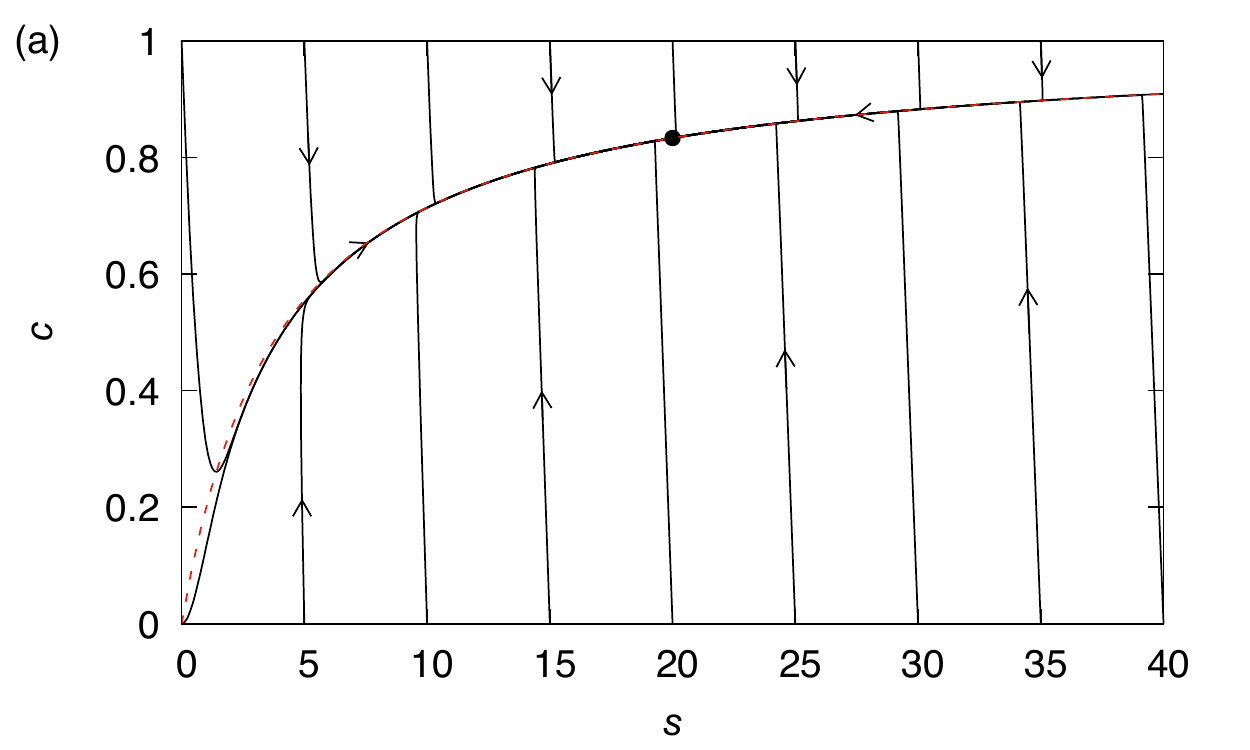}
    \includegraphics[scale=0.8]{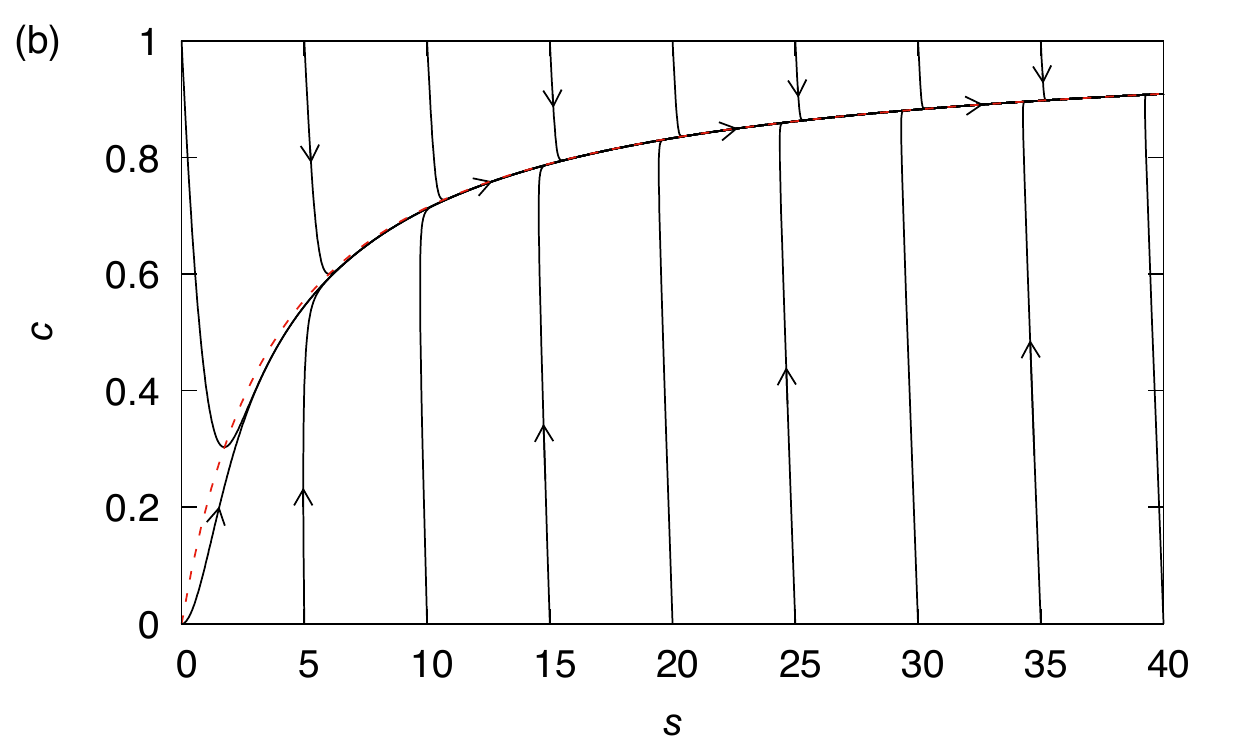}
    \caption{Trajectories of the open Michaelis--Menten equations~(\ref{mmo}) for (a) $k_1=1$,   $e_T=1$,   $k_{-1}=1$   $k_2=3$ and   
$k_0=2.5$ (in arbitrary units), i.e.\ under conditions where there is an equilibrium point in the first quadrant, marked by a dot; and (b) with parameters as in (a), except $k_0=3.5$, under which conditions there is not an equilibrium point in the first quadrant, and the $s$ component of the solution grows without bound.
The arrows show the direction of the flow. The dashed curve in both figures is defined by the QSSA equation~(\ref{eq:c_of_s}).
    \label{fig:traj}}
\end{figure}
We thus ask under what condition is the open sQSSA
\begin{equation}\label{osQSSA}
    \dot{s} = k_0 - \cfrac{k_2e_Ts}{K_M+s}, 
\end{equation}
permissible? At first glance, it seems rather intuitive to postulate that the open sQSSA~(\ref{osQSSA}) is valid under the same condition that legitimizes the closed sQSSA: $e_T \ll K_M$. In fact, following the earlier work of Segel and Slemrod \cite{Segel1989}, Stoleriu et~al.~\cite{Stoleriu2004} suggest that (\ref{csQSSA}) is applicable whenever
\begin{equation}\label{STOLcon}
e_T \ll s_0 + K_M\bigg(\cfrac{1}{1-\alpha}\bigg) + \cfrac{k_0}{k_2}, \quad \alpha:=k_0/k_2e_T,
\end{equation}
holds. The inequality (\ref{STOLcon}) is less restrictive than the Segel and Slemrod condition, since (\ref{STOLcon}) is satisfied as long as $k_0$ is sufficiently close to $k_2e_T$ (Implicitly, the authors assume that $\alpha<1$ in equation~\eqref{STOLcon}).

The approach used to derive (\ref{STOLcon}) was based on the traditional method of comparing time scales: a singular perturbation parameter was recovered through scaling analysis of the mass action equations (\ref{mmo}). However, it is possible to derive erroneous conclusions regarding the validity of the QSSA, even when great care is taken in scaling and non-dimensionalization methodology (see, for example \cite{Goeke2012}, Section 4). It thus seems prudent to reexamine the basis for the sQSSA in the open Michaelis--Menten mechanism using tools of singular perturbation theory that go beyond scaling arguments.

\section{The Quasi-Steady-State Approximation: Justification from singular perturbation theory}\label{GSPT}

In this section we derive the QSSA directly from Fenichel theory. Details covering projection onto the slow manifold can be found in Appendix~\ref{AppendA}.

\subsection{The critical manifolds: Tikhonov--Fenichel parameter values}\label{TFPVsubs}
To apply Fenichel theory to the open Michaelis--Menten reaction mechanism, we need a curve of non-isolated equilibrium solutions to form in the first quadrant of $\mathbb{R}^2$; see \cite{Goeke2015}. The following Lemma addresses the conditions that ensure the existence of a critical manifold, and records some general qualitative features.
\begin{lemma}\label{basicfacts}
\begin{enumerate}[(a)]
\item System \eqref{mmo} admits an infinite number of stationary points if and only if one of the following conditions holds.
\begin{itemize}
\item $k_0=k_1=0$;
\item $k_0=e_T=0$;
\item $k_0=k_2=0$.
\end{itemize}
\item If the number of stationary points in the plane is finite then it is equal to zero or one. There exists one stationary point if and only if the genericity conditions 
\begin{equation}\label{genercond}
k_1\not=0,\quad k_2\not=0 \text{  and    }k_2e_T-k_0\not=0
\end{equation}
are satisfied. In that case the stationary point is equal to
\begin{equation}\label{stapo}
P_0:=\left(\widehat s,\,\widehat c\right)=\left(\frac{(k_{-1}+k_2)k_0}{k_1(k_2e_T-k_0)},\frac{k_0}{k_2}\right).
\end{equation}
This point lies in the first quadrant if and only if
\begin{equation}
    k_2e_T-k_0>0,
\label{eq:enzcapcondition}
\end{equation}
in which case it is an attracting  node. The stationary point lies in the second quadrant if and only if $k_2e_T-k_0<0$,
in which case it is a  saddle point.
\item The first quadrant is positively invariant for system \eqref{mmo}, and solutions starting in the first quadrant exist for all $t\geq 0$. When $k_{-1}+k_2>0$ then every solution that starts in the first quadrant enters the (positively invariant) subset defined by $c\leq e_T$ at some positive time.
\item System \eqref{mmo} admits no nonconstant closed trajectory.
\end{enumerate}
\end{lemma}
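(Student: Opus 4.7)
My plan is to handle parts (a)--(d) in order, anchored throughout on the single algebraic identity $\dot s + \dot c = k_0 - k_2 c$, which must vanish at every equilibrium. This reduces equilibrium counting to a case split on whether $k_2 = 0$ and simultaneously produces the formula for $P_0$.

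For (a) and (b), when $k_2 \neq 0$ the identity forces $c = k_0/k_2$ at any equilibrium, and substituting into $\dot c = 0$ yields a linear equation in $s$. Its solution count follows a routine case analysis: the equation has infinitely many roots iff both its coefficient and its right-hand side vanish, which under nonnegative rate constants singles out the two scenarios $k_0 = k_1 = 0$ and $k_0 = e_T = 0$; it has exactly one root when the genericity conditions~\eqref{genercond} hold, yielding $P_0$ as in~\eqref{stapo}; and otherwise it has none. The remaining subcase $k_2 = 0$ requires $k_0 = 0$ for any equilibrium to exist, completing~(a) with the third scenario and the equilibrium-counting part of~(b).

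For the stability assertion in~(b), I would compute the Jacobian, obtaining $\mathrm{tr} = -k_1(e_T - c + s) - (k_{-1}+k_2)$ and $\det = k_1 k_2 (e_T - c)$, so that the factor $e_T - \widehat c = (k_2 e_T - k_0)/k_2$ transmits the sign of $k_2 e_T - k_0$ to the determinant. In the attracting case the trace is negative and the determinant positive; the delicate point is excluding complex eigenvalues, which I plan to address by rearranging
\begin{equation*}
\mathrm{tr}^2 - 4\det = \bigl[k_1(e_T - c + s) - (k_{-1}+k_2)\bigr]^2 + 4 k_1 k_{-1}(e_T - c + s) + 4 k_1 k_2 s,
\end{equation*}
which is manifestly nonnegative in the first quadrant when $c \leq e_T$. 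This sum-of-squares identity is the main technical wrinkle of the whole lemma. The saddle case $k_2 e_T - k_0 < 0$ follows immediately from $\det < 0$, and the quadrant identification of $P_0$ is read off from~\eqref{stapo}.

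Part (c) should follow by inspecting the flow on the boundary: $\dot s|_{s=0} = k_0 + k_{-1} c \geq 0$ and $\dot c|_{c=0} = k_1 e_T s \geq 0$ give positive invariance of the first quadrant; on the line $c = e_T$ one has $\dot c = -(k_{-1}+k_2) e_T \leq 0$, so $\{c \leq e_T\}$ is trapping, and for $c > e_T$ the same formula bounds $\dot c \leq -(k_{-1}+k_2) e_T < 0$ whenever $k_{-1}+k_2 > 0$, forcing entry into the strip in finite time. With $c$ bounded, $\dot s$ is at most linear in $s$ with bounded coefficients, so a Gronwall estimate rules out finite-time blow-up and gives global existence for $t \geq 0$. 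Finally, for~(d) I would invoke the Bendixson--Dulac criterion with trivial Dulac function: the divergence of the vector field equals the trace above and is strictly negative throughout the invariant strip $0 \leq c \leq e_T$, so no periodic orbit lives there, and since every first-quadrant orbit enters the strip and cannot leave, no periodic orbit exists in the first quadrant at all.
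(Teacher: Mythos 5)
Your parts (a)--(c) are sound and in fact supply details the paper waves off as ``straightforward'': the reduction of equilibrium counting to the identity $\dot s+\dot c=k_0-k_2c$ followed by a linear equation in $s$ is clean and gives exactly the case list of (a) and the formula \eqref{stapo}, and your trace/determinant computation is correct ($\det=k_1k_2(e_T-c)$, so the sign of $k_2e_T-k_0$ does control node versus saddle). The sum-of-squares rearrangement of $\mathrm{tr}^2-4\det$ checks out algebraically and is a genuine addition, since it is the only way to upgrade ``attracting'' to ``attracting \emph{node}'' and the paper's sketch is silent on this point. Your boundary-flux argument and Gronwall bound for (c) are equivalent to the paper's $\dot s+\dot c\le k_0$ compactness argument.

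The gap is in (d). The lemma asserts the absence of nonconstant closed trajectories for the system as such, i.e.\ anywhere in the plane and for all admissible parameters, whereas your Bendixson argument only excludes periodic orbits that meet the first quadrant, and it tacitly assumes the divergence $-\left(k_1(e_T-c)+k_1s+k_{-1}+k_2\right)$ is \emph{strictly} negative on the strip, which fails in degenerate cases such as $k_{-1}=k_2=0$. The missing step, which is how the paper closes both holes at once, is index-theoretic: a nonconstant closed trajectory must enclose stationary points of total index $+1$. In the degenerate cases of (a) the stationary set is an unbounded variety, which would have to cross the closed orbit --- a contradiction; so one may assume the genericity conditions \eqref{genercond}, under which the unique stationary point must have index $+1$, hence cannot be the saddle in the second quadrant and must be the node $P_0$ in the first quadrant. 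Only then does it follow (via a ray from $P_0$ and positive invariance of the first quadrant) that the closed orbit lies in the first quadrant, is eventually trapped in the strip $c\le e_T$ by your part (c), and is killed by Bendixson there, where $k_{-1}+k_2>0$ now guarantees strict negativity of the divergence. Without this reduction your argument leaves open both the degenerate parameter values and hypothetical closed orbits avoiding the first quadrant.
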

\begin{proof}[Sketch of proof]
Parts (a) and (b) are straightforward, as is the first statement in part (c). For the second statement note $\dot s+\dot c\leq k_0$, hence solutions starting in the first quadrant remain in a compact set for all finite $t>0$. Finally, when $c\geq e_T$ then \eqref{mmo} shows that
$\dot c\leq-(k_{-1}+k_2)e_T$, hence the second statement of part (c) holds. We turn to the proof of part (d): If there exists a nonconstant closed trajectory then its interior contains a stationary point. Given a degenerate situation from part (a), the variety of stationary points is unbounded, hence would intersect a closed trajectory if it intersects its interior; a contradiction. This leaves the setting with an isolated stationary point, necessarily of index one, which is only possible when the stationary point \eqref{stapo} lies in the first quadrant. By part (c) the closed trajectory must be contained in the strip defined by $c\leq e_T$. But in this strip the divergence of the vector field equals $-\left( k_1(e_T-c)+k_1s+k_{-1}+k_2\right)<0$, and no closed nonconstant trajectory can exist by Bendixson's criterion.
\end{proof}
\begin{remark}
The case $k_0>k_2e_T$, in which the inflow exceeds the enzyme's clearance capacity, is not physiologically irrelevant since the gene coding for a particular enzyme may suffer a mutation that results in an enzyme with reduced catalytic activity, for example. As a rule, the accumulation of a metabolite will eventually become toxic (or possibly oncogenic) to the cell, and the rate at which S accumulates is therefore of interest. Other situations, e.g.\ the existence of an alternative but less efficient pathway for eliminating S, or the permeation of S through the cell membrane, would require more elaborate models for their study. Nevertheless, the model under study here would yield useful initial insights into the cellular effects of a mutation to an enzyme.
\end{remark}

Lemma (\ref{basicfacts}) ensures the existence of a critical manifold comprised of equilibrium points whenever $k_0$ vanishes along with either $e_T$, $k_1$ or $k_2$ in the singular limit. We note that in the context of the closed reaction (\ref{mmc}), parameters with $e_T=0$ (with all remaining parameters $>0$), respectively to $k_1=0$ (remaining parameters $>0$), respectively to $k_2=0$ (remaining parameters $>0$), are TFPV. Generally, a TFPV $[\widehat k_0\;\widehat e_T\;\widehat k_1\;\widehat k_2\;\widehat k_{-1}]$ is characterized by the property that a generic small perturbation results in the formation of a normally hyperbolic critical manifold \cite{Goeke2017}.

Let $\pi\in \mathbb{R}^5_+$ denote the parameter vector: $\pi :=[k_0 \;\; e_T\;\; k_1 \;\; k_2\;\;k_{-1}]^T$. The TFPVs and the critical manifolds, $M$, are as follows:
\begin{subequations}\label{Cman}
\begin{align}
\pi&=[0\;0\;k_1\;k_2\;k_{-1}] \implies M:=\{(s,c)\in \mathbb{R}^2: c=0\},\label{Cmank0eT}\\
\pi&=[0\;e_T\;0\;k_2\;k_{-1}] \implies M:=\{(s,c)\in \mathbb{R}^2: c=0\},\label{Cmank0k1}\\
\pi&=[0\;e_T\;k_1\;0\;k_{-1}] \implies M:=\{(s,c)\in \mathbb{R}^2: c=k_1e_Ts/(k_{-1}+k_1s)\}.\label{Cmank0k2}
\end{align}
\end{subequations}

Normal hyperbolicity and Fenichel theory ensure that perturbing $\pi$ in (\ref{Cman}) along a curve in parameter space through the TFPV results in the formation of an invariant slow manifold that attracts nearby trajectories at an exponential rate. Formally, the QSSA may be seen as an approximation of the dynamics on the slow manifold, perturbing from a TFPV.

\subsection{Singular perturbations and the geometry of parameter space}

The justification of the QSSA from singular perturbation theory requires us to implicitly equip parameter space with some additional geometric structure. For example, consider the case where both $e_T$ and $k_0$ vanish in the singular limit. In order to formally apply singular perturbation theory, it must hold that\footnote{The statement $k_0\sim O(e_T)$ is a bit awkward since $e_T$ and $k_0$ carry different units, but suitable dimensionless parameters will be discussed in Section \ref{GeneralQSS}.} $k_0 \sim O(e_T)$. Generally speaking, this means that we can apply singular perturbation theory along a parametric curve, $\Gamma$, in $(e_T,k_0)$ parameter space, $\Gamma:=(e_T,z(e_T))$, provided $z(0)=0$ and
\begin{equation}
\displaystyle \lim_{e_T\to 0^+} \cfrac{z(e_T)}{e_T} < \infty.
\label{z_over_eT}\end{equation}
However, a \textit{small} perturbation suggests that the parameter values will be \textit{close} to the parameter plane origin located at $(e_T,k_0)=(0,0)$. In this case $z(e_T)$ is well-approximated by its tangent line at $e_T=0$ (higher order terms in the Taylor expansion have no influence on the lowest order reduction), thus it is enough to only consider rays of the form $k_0=\gamma e_T$, where $\gamma$ is a positive constant with dimension $t^{-1}$. To eliminate the need for a dimensional slope $\gamma$, one can also consider rays of the form $k_0 = \alpha e_Tk_2$, where $\alpha$ is a dimensionless constant. Although, the easiest and perhaps clearest way to define a ray in parameter space is to set
\begin{equation}\label{ray}
    e_T\mapsto \eps e_T^*\quad\text{and}\quad k_0\mapsto \eps k_0^*,
\end{equation}
 where the parameters $k_0^*$ and $e_T^*$ are of unit magnitude and carry the units of $k_0$ and  $e_T$, respectively. 

The  additional constraint of sampling parameter space along a ray [or in a more general way along a curve satisfying~(\ref{z_over_eT})] must be imposed in order to justify the open sQSSA from singular perturbation theory. In their analysis of the open Michaelis--Menten reaction~(\ref{mmo}), Stoleriu et al.~\cite{Stoleriu2004} implicitly performed their analysis along a ray defined by
\begin{equation}\label{StolRay}
    e_T = e(0) + k_0/k_2,
\end{equation}
where $e(0)>0$ is the initial free enzyme concentration.   This ray in parameter space is encoded in their initial conditions, which allow for an arbitrary positive value of $e(0)$, but which specify $c(0)=\widehat{c}=k_0/k_2$. The advantage of working along the ray defined by~(\ref{StolRay}) is that there is no possibility that the inflow can exceed the clearance capacity of the enzyme, i.e.\ inequality~(\ref{eq:enzcapcondition}) is automatically satisfied.

In order to apply singular perturbation theory, we need to start from a critical manifold, i.e.\ from one of the cases in the set~(\ref{Cman}). Note that the ray through the $(e_T,k_0)$ parameter plane chosen by Stoleriu et al.~\cite{Stoleriu2004}, equation~(\ref{StolRay}), does not satisfy~(\ref{z_over_eT}) unless $e(0)=0$. This leads to difficulties. For example, the condition~(\ref{STOLcon}) \textit{along} the ray defined by~(\ref{StolRay}) translates to
\begin{equation}\label{STOLconRay}
k_1e(0)\ll k_1s_0 + (k_{-1}+k_2)\bigg(\cfrac{1}{1-\alpha}\bigg).
\end{equation}
The inequality (\ref{STOLconRay}) is satisfied by taking $k_1 \to 0$, but this limit alone does not produce a critical manifold. Hence, the singular perturbation  machinery is not obviously applicable to legitimizing the open sQSSA~(\ref{osQSSA}). 

Another issue with the constrained set of initial conditions imposed by~(\ref{StolRay}) is that it  excludes many initial conditions that are physiologically relevant. For example, a natural initial condition is $(s,e,c,p)(0)=(0,e_T,0,0)$, corresponding to the substrate flow being turned on at time zero (e.g.\ because the cell is placed in a new environment, or because it has turned on a previously dormant metabolic pathway that produces~S), but this initial point is \textit{inaccessible} if the parametric constraint~(\ref{StolRay}) has been imposed. Consequently, it remains an open question whether the results of the analysis apply at arbitrary points in parameter space and for arbitrary initial conditions. In particular, there is no guarantee that the analysis of Stoleriu et al.~\cite{Stoleriu2004} applies when the inflow exceeds the clearance capacity of the enzyme which, as argued previously, is not an irrelevant case. By contrast, a transformation informed by the basic requirements of singular perturbation theory such as~(\ref{ray}) allows us to make rigorous statements about the manifold structure of the problem, and imposes no constraints on the initial conditions.

\subsection{Quasi-steady-state reductions: Projecting onto the slow manifold}

Let us now consider the first scenario in which $e_T$ and $k_0$ vanish in the singular limit. The perturbation of the singular vector field is
\begin{equation}\label{p1}
\begin{array}{rcl}
\dot s&=& \eps k_0^*-k_1(\eps e_T^*-c)s + k_{-1}c,\\
\dot c&=& k_1(\eps e_T^*-c)s -( k_{-1}+k_2)c,\\
\end{array}
\end{equation}
Again, the singular limit obtained by setting $\eps=0$ in (\ref{p1}) yields a critical manifold, $M$, that is identically the $s$~axis:
\begin{equation}
    M:=\{(s,c)\in\mathbb{R}^2: c=0\}.
\end{equation}

To compute the corresponding singular perturbation reduction (see Appendix~\ref{AppendA} for specific details), we rewrite the right hand side of (\ref{p1}) as $P(s,c)f(s,c)+\eps G(s,c,\eps)$:
\begin{equation}
P(s,c) := \begin{bmatrix}k_1  s + k_{-1} \\
-k_1s -(k_{-1}+k_2)\end{bmatrix}, \quad f(s,c):=c, \quad G(s,c,\eps):=\begin{bmatrix}k_0-k_1e_Ts\\ k_1e_Ts\end{bmatrix}.
\end{equation}
The singular perturbation reduction is then obtained by projecting $G(s,c,0)$ onto the tangent space of $M$ at $x$ via the linear operator $\Pi^M$ which projects ``onto the kernel along the image'' of $N$:
\begin{equation}
 \Pi^M|_{c=0} G(s,0,0).
\end{equation}
For our specific problem (\ref{p1}), $\Pi^M$ is given by
\begin{equation}
\Pi^M := \begin{bmatrix} 1 & u(s)\\ 0 & 0\end{bmatrix}, \quad u(s):=\cfrac{(s+K_S)}{(s+K_M)}, \quad K_S:=k_{-1}/k_1,
\end{equation}
and the corresponding reduction, which agrees with the QSS reduction, is
\begin{equation}\label{QSSe}
  \dot{s}= k_0-\cfrac{k_2e_Ts}{K_M+s}.
\end{equation}
Equation~(\ref{QSSe}) is, of course, the open sQSSA. A similar calculation is easily carried out for the case of small $k_1$ and small $k_0$, as well as small $k_0$ and $k_2$, and we refer the reader to Appendix \ref{AppendA} for details. The specific QSS reduction that accompanies the perturbation defined by $k_1 \mapsto \eps k_1^*$ and $k_0 \mapsto \eps k_0^*$ is
\begin{equation}
    \dot{s} = k_0 - \cfrac{k_2e_T}{K_M}s,
\label{linlimit1}\end{equation}
which is the linear limiting law obtained in the small-$s$ limit of (\ref{QSSe}).

Accordingly, we have confirmation that the open sQSSA (\ref{osQSSA}) is valid under any condition that invokes a scaling of the form $k_0\mapsto \eps k_0^*$ and $e_T\mapsto \eps e_T^*$. We further note that a QSS reduction based on Fenichel theory is also possible in case $k_0 \mapsto \eps k_0^*$ and $k_2 \mapsto \eps k_2^*$ so that both $k_0$ and $k_2$ vanish in the singular limit. This reduction yields the classical equilibrium approximation (see, section~\ref{sec:k0k2manif} and Appendix~\ref{AppendA} for details).

Several questions remain. First, what is $\eps$? We have shown that the open sQSSA is valid provided $k_0$ and $e_T$ are sufficiently small, but \textit{what is small} when $k_0$ and $e_T$ are nonzero? Second, from the work of Goeke et al. \cite{Goeke2017}, the QSS may still hold in certain regions of the phase-plane even if Fenichel theory is not applicable. The analysis of Stoleriu et al. \cite{Stoleriu2004} is also indirectly suggestive of the idea that the validity the open sQSSA may not necessarily stem from singular perturbation theory. These observations raise the deeper question: is a scaling of the form $k_0 \mapsto \eps k_0^*, e_T\mapsto \eps e_T^*$ necessary for the validity of the QSSA, or merely sufficient? We address these questions directly in the sections that follow.

\section{Quasi-steady state for complex revisited}\label{GeneralQSS}
\subsection{The notion of QSS}
Singular perturbation theory provides a natural setting for developing conditions under which QSSA holds, but the literature (notably Stoleriu et al.~\cite{Stoleriu2004} for open Michaelis--Menten reaction mechanism) suggests that one should consider less restrictive notions as well. In the following we will sketch one such notion. This goes back to  Schauer and Heinrich \cite{Schauer1979}, who were the first to note that the minimal requirement for the validity of QSS reduction should be the \textit{near-invariance} of an appropriate QSS variety. The idea of near-invariance was expounded upon by Noethen et al.~\cite{Noethen2009}, and further analyzed by Goeke et al.~\cite{Goeke2017}: 
\begin{itemize}
\item As a starting point we take a fundamental feature of QSS for certain (sets of) species in a reaction: The rate of change for these species should be close to zero for an extended period of time. (In the Michaelis--Menten reaction, QSS for complex thus means that $\dot c\approx 0$ for an extended period of time.) In the phase space interpretation, a sizable part of the trajectory should thus be close to the {\em QSS variety} which is defined by setting the rates of change for the relevant species equal to zero (In the Michaelis--Menten reaction mechanism one thus has $c\approx k_1e_Ts/(k_1s+k_{-1}+k_2)$). The validity of such a condition will depend on the parameters.
\item  According to \cite{Goeke2017}, Section 3.3, the minimal requirement for QSS should therefore be near-invariance of the QSS variety, in the sense that the system parameters are small perturbations of QSS parameter values. By definition, at a QSS parameter value the QSS variety is an invariant set for system~\eqref{mmo}. (In the Michaelis--Menten reaction mechanism one thus has invariance of the variety defined by $c= k_1e_Ts/(k_1s+k_{-1}+k_2)$ for \eqref{mmo} at a QSS parameter value.) The arguments in  \cite{Goeke2017} show that this condition is necessary if one requires arbitrary accuracy of the QSS approximation for suitable parameters. By standard dependency theorems, small perturbations of a QSS parameter value yield trajectories that remain close to the QSS variety on compact time intervals; thus the condition is also sufficient.  One practical advantage of this notion is that QSS parameter values, similar to TFPV, are algorithmically accessible for polynomial or rational systems.
\item The near-invariance condition alone may not be considered sufficiently strong to satisfy expectations about QSS. One may also require that solutions quickly approach the QSS variety in an initial transient phase. Since the combination of these two features is automatically satisfied in singular perturbation settings, singular perturbations naturally enter the picture. But the singular perturbation scenario is both broader and narrower than QSS for chemical species: It is broader since it also is applicable to settings with slow and fast reactions. On the other hand, we will see below that it is, in a sense, too narrow for sQSS in the open Michaelis--Menten reaction mechanism.
\end{itemize}
\subsection{Open Michaelis--Menten: QSS parameter values for complex}
The QSS variety  for \eqref{mmo} is given by
\begin{equation}\label{qssvar}
    c=w(s):=\cfrac{k_1e_Ts}{k_1s+k_{-1}+k_2}.
\end{equation}
We prefer this to the usual notation
    $w(s)=e_Ts/(K_M+s)$, which may obscure the role of $k_1$. We first determine all QSS parameter values.
\begin{lemma}
The QSS parameters of system \eqref{mmo} are as follows:
\begin{enumerate}[(i)]
\item $e_T=0$ with the other parameters arbitrary;\label{QSSparameT}
\item  $k_1=0$ with the other parameters arbitrary;\label{QSSparamk1}
\item $k_0=k_2=0$;\label{QSSparamk0k2}
\item $k_{-1}=k_2=0$.\label{QSSparamkm1k2}
\end{enumerate}
\end{lemma}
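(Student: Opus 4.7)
The plan is to parametrize the QSS variety by $c=w(s)$ and translate invariance into a tangency condition on the vector field \eqref{mmo}. Since by the defining equation $\dot c\equiv 0$ along $c=w(s)$, the restriction of the vector field to the variety reduces to $(\dot s|_{c=w(s)},\,0)$, which is tangent to the graph of $w$ precisely when $w'(s)\cdot \dot s|_{c=w(s)}\equiv 0$ as a function of $s$ on $[0,\infty)$. The QSS parameter values are therefore exactly those for which this scalar identity holds.

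I would first put both factors in closed form. Differentiating \eqref{qssvar} gives
\begin{equation*}
w'(s)=\frac{k_1e_T(k_{-1}+k_2)}{(k_1s+k_{-1}+k_2)^2}.
\end{equation*}
For the second factor, the key simplification is that $\dot c=0$ on the variety forces $k_1(e_T-c)s=(k_{-1}+k_2)c$, so $\dot s$ collapses to $k_0-k_2c=k_0-k_1k_2e_Ts/(k_1s+k_{-1}+k_2)$. Clearing denominators, the invariance identity becomes the polynomial identity in $s$
\begin{equation*}
k_1e_T(k_{-1}+k_2)\,\bigl[k_0(k_1s+k_{-1}+k_2)-k_1k_2e_Ts\bigr]\equiv 0.
\end{equation*}

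This product vanishes identically iff either the $s$-independent prefactor $k_1e_T(k_{-1}+k_2)$ is zero, or the bracketed polynomial (which is linear in $s$) has both its constant term $k_0(k_{-1}+k_2)$ and its leading coefficient $k_1(k_0-k_2e_T)$ equal to zero. The prefactor vanishing reproduces exactly (i), (ii), and (iv) of the lemma. A short case analysis of the bracket-vanishing conditions shows that the only new scenario not subsumed by the prefactor cases is $k_0=k_2=0$, i.e.\ case (iii). Conversely, each of (i)--(iv) makes one of the two factors vanish identically, so invariance holds.

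The main subtlety I anticipate concerns the degenerate cases where the parametrization $c=w(s)$ fails to describe the QSS variety faithfully: when $k_1=0$, the defining equation $\dot c=0$ degenerates to $(k_{-1}+k_2)c=0$, cutting out the $s$-axis (or the entire plane if in addition $k_{-1}=k_2=0$); and when $k_{-1}+k_2=0$ with $k_1>0$, the equation factors as $k_1s(e_T-c)=0$, cutting out $\{s=0\}\cup\{c=e_T\}$. In each such case I would confirm invariance of the true algebraic variety by direct substitution into \eqref{mmo}, thereby verifying that the list (i)--(iv) is both necessary and sufficient.
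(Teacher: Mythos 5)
Your argument is correct in substance and is, at bottom, the same computation as the paper's, only dressed differently. The paper applies the Lie-derivative invariance criterion to the defining polynomial $\psi(s,c)=k_1(e_T-c)s-(k_{-1}+k_2)c$ and reduces invariance to the condition $k_1(e_T-c)(k_0-k_2c)=0$ on $\psi=0$; you impose tangency of the vector field to the graph $c=w(s)$ and arrive at $k_1e_T(k_{-1}+k_2)\bigl[k_0(k_1s+k_{-1}+k_2)-k_1k_2e_Ts\bigr]\equiv 0$. Since on the graph one has $k_1(e_T-c)=w'(s)\,(k_1s+k_{-1}+k_2)$ and $\dot s=k_0-k_2c$, the two conditions differ only by the factor $k_1s+k_{-1}+k_2$, which does not vanish identically unless $k_1=k_{-1}=k_2=0$. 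Your case analysis of the resulting polynomial identity is correct and recovers exactly (i)--(iv). What the paper's ideal-theoretic formulation buys is a cleaner treatment of the reducible and degenerate configurations, since the criterion is applied to irreducible factors and no graph parametrization is needed.

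The one place where your deferred verification needs care is case (iv) with $k_0>0$: there the zero set of $\psi$ is $\{s=0\}\cup\{c=e_T\}$, and the component $\{s=0\}$ is \emph{not} invariant (on it $\dot s=k_0>0$ while $\dot c=0$, so trajectories leave the set). Hence ``confirming invariance of the true algebraic variety by direct substitution'' would fail for the full variety. What is invariant is the irreducible component $c=e_T$ --- precisely the curve your parametrization $c=w(s)$ selects after cancelling the common factor $k_1s$, and precisely the factor to which the paper applies its criterion. So your main computation already gives the intended answer; you should simply replace the promised check on the full variety by the observation that the relevant (graph) component is the invariant one. The remaining degenerate checks are immediate as you say: for $k_1=0$ the variety is $\{c=0\}$, on which $\dot c=k_1e_Ts=0$, and for $k_1=k_{-1}=k_2=0$ one has $\psi\equiv 0$.
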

\begin{proof} We proceed along the lines of \cite{Goeke2017}, Section 3.4, using an invariance criterion that employs the Lie derivative, $L[\cdot]$, corresponding to \eqref{mmo}. The Lie derivative is defined by
\[
\begin{aligned}
    L[\varphi](s,c)&=\dot{s}\cfrac{\partial \varphi}{\partial s} + \dot{c}\cfrac{\partial \varphi}{\partial c}\\
    &=\left(k_0-k_1(e_T-c)s+k_{-1}c\right)\cfrac{\partial \varphi}{\partial s} + \left(k_1(e_T-c)s-(k_{-1}+k_2)c\right)\cfrac{\partial \varphi}{\partial c}
\end{aligned}
\]
for any polynomial (more generally, smooth) function $\varphi$. For the variety defined by $\varphi=0$ to be invariant it is necessary that
\[
L[\varphi](s,c)=0 \text{  whenever }\varphi(s,c)=0.
\]
 Moreover, the condition is sufficient when $\varphi$ is irreducible, and it is applicable to the irreducible factors of $\varphi$; for details see \cite{Goeke2017} and the references therein. 

Now let $\psi(s,c)=0$ define the QSS manifold, thus
\begin{equation}
    \psi(s,c):=k_1(e_T-c)s-(k_{-1}+k_2)c.
\end{equation}
The invariance condition for the curve $\psi(s,c)=0$ is 
\begin{equation}
    L[\psi(s,c)]=-k_1(e_T-c)(-\psi(s,c)+k_0-k_2c)-(k_1s+k_{-1}+k_2)\psi(s,c)=0
\end{equation}
whenever $\psi(s,c)=0$, thus
\begin{equation}\label{LieF}
k_1(e_T-c)(k_0-k_2c)=0 \text{  whenever  } \psi(s,c)=0.
\end{equation}
This product yields three conditions which can be evaluated. Clearly $k_1=0$ works and yields~(\ref{QSSparamk1}). The second condition, $e_T-c=0$, holds on $\psi=0$ if and only if $(k_{-1}+k_2) e_T=0$, which yields (\ref{QSSparamkm1k2}) respectively to~(\ref{QSSparameT}). The third condition yields $k_0=k_2=0$ when $k_2=0$, i.e.~(\ref{QSSparamk0k2}). In case $k_2\not=0$ one obtains $c=k_0/k_2$, and
\[
k_1(e_T-k_0/k_2)s-(k_{-1}+k_2)k_0/k_2=0\text{  for all  }s;
\]
here the coefficient of $s$ and the constant must vanish. This again leads to conditions already discussed.
\end{proof}
\begin{remark}\label{qssprem}
\begin{enumerate}[(a)]
\item In cases (i) and (ii), the QSS variety is given by $c=0$, provided that the other parameters are positive, and the QSS parameter conditions are less restrictive than for singular perturbations, which also require $k_0=0$. This is a notable difference to the closed Michaelis--Menten scenario, for which all complex-QSS parameter values are also TFPV. Case (iii) corresponds to a singular perturbation scenario. The dynamics in case (iv) is of some interest in the Michaelis--Menten reaction mechanism without inflow; see \cite{EILERTSEN2020}.
\item Classical QSS reduction is tantamount to exploiting the fact that if $\psi(s,c)=0$ defines a nearly invariant curve, then $c\approx w(s)$, from which the open sQSSA~(\ref{osQSSA}) presumably follows. However, a word of caution is in order. When a QSS parameter value is also consistent with a singular perturbation and gives rise to a critical manifold, the classical QSS reduction may differ from the reduction obtained from Fenichel theory (see \cite{Goeke2017}, Section 3.5). For example, $\psi(s,c)=0$ is nearly invariant if $k_0$ and $k_2$ are small, but the classical QSS reduction, given by
\begin{equation}\label{falseQSS}
\dot{s} = k_0 - \cfrac{k_2e_Ts}{k_1s+k_{-1}},
\end{equation}
does not agree with the reduction obtained from singular perturbation theory, which is given by \eqref{nonqss}. 
Convergence to the singular perturbation reduction is guaranteed by Fenichel theory, hence the QSS reduction \eqref{falseQSS}  cannot correctly describe the dynamics at lowest order.
\end{enumerate}
\end{remark}
For the QSS parameters which do not correspond to singular perturbations, there remains to investigate whether solutions approach this variety, and if so, how fast and how close the approach is. Furthermore, even in the singular perturbation scenario one needs estimates on the initial (boundary layer) behavior, since Fenichel's theory applies directly only to a neighborhood of the critical variety. 

These problems will be addressed via direct estimates, which will also 
be of help in answering a quantitative question, i.e.\ how small should $e_T$ respectively to $k_1$ be in order to justify (\ref{osQSSA}). Ultimately, the term \textit{small} is relative in nature. Therefore, the appropriate question to ask is: For (\ref{osQSSA}) to be approximately accurate, $e_T$ and $k_0$ must be much smaller than \textit{what}? Before we start this investigation we establish an auxiliary result about the phase plane geometry of \eqref{mmo}.

\subsection{Phase plane arguments}\label{sec:pp}

From here on we restrict attention to system \eqref{mmo} on the positively invariant strip $W$ defined by $s\geq 0$ and $0\leq c\leq e_T$. A priori we impose no requirements on the parameters. We look at isoclines, noting that 
\begin{align}
&\dot c=0 \Leftrightarrow c=\mathcal{N}_c(s)\equiv\frac{k_1e_Ts}{k_1s+k_{-1}+k_2}, && \dot c\geq 0 \Leftrightarrow c\leq \mathcal{N}_c(s);\label{isoc}\\
\intertext{and} 
&\dot s=0 \Leftrightarrow c=\mathcal{N}_s(s)\equiv\frac{k_1e_Ts-k_0}{k_1s+k_{-1}}, && \dot s\geq0 \Leftrightarrow c\geq \mathcal{N}_s(s),\label{isos}
\end{align}
where $\mathcal{N}_x$ denotes the $x$~nullcline.
These nullclines define positively invariant sets:
\begin{lemma}\label{lemma:wedge} Consider the ``wedge''
\[
W_1:={\rm max}\,\left\{0,\,\frac{k_1e_Ts-k_0}{k_1s+k_{-1}}\right\}\leq c\leq \frac{k_1e_Ts}{k_1s+k_{-1}+k_2}, \quad s\geq 0.
\] Then the following hold:
\begin{enumerate}[(a)]
\item If the system admits no positive stationary point, thus $k_0>k_2e_T$, then the $c$-isocline lies above the $s$-isocline for all $s\geq 0$, and $W_1$ extends to $s\to\infty$. If the system admits the positive stationary point $(\widehat s,\widehat c)$ then the isoclines meet at this point, and $s\leq \widehat s$, $c\leq \widehat c$ for all points of $W_1$.
\item
$W_1$ is positively invariant for system \eqref{mmo}, and on $W_1$ one has $\dot s\geq 0$.
\end{enumerate}
\end{lemma}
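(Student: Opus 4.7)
The plan is to reduce both parts to a single explicit calculation: the sign of $\mathcal{N}_c(s)-\mathcal{N}_s(s)$. I would first bring the two isoclines from \eqref{isoc}--\eqref{isos} onto the common denominator $(k_1s+k_{-1}+k_2)(k_1s+k_{-1})$; after the cancellation of the $k_1^2 e_T s^2$ terms the numerator collapses to the linear expression
$k_1 s(k_0-k_2 e_T)+k_0(k_{-1}+k_2)$. Part (a) reads off immediately. When $k_0>k_2 e_T$, both summands are nonnegative (and the constant one is positive), so $\mathcal{N}_c>\mathcal{N}_s$ on all of $s\geq 0$ and $W_1$ extends to infinity. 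When $k_0<k_2 e_T$, this expression has a unique positive root, which a short calculation identifies with the stationary coordinate $\widehat s$ of \eqref{stapo}, and at that value both isoclines take the common value $\widehat c=k_0/k_2$. The bounds $s\leq \widehat s$ and $c\leq\widehat c$ on $W_1$ then follow from the strict monotonicity of $\mathcal{N}_c(s)$, which is immediate from its derivative.

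For part (b), I would check positive invariance boundary by boundary, using the standard observation that on a $\dot c=0$ piece only the sign of $\dot s$ decides whether the flow enters $W_1$, and symmetrically on a $\dot s=0$ piece only $\dot c$ matters. On the upper boundary $c=\mathcal{N}_c(s)$, substituting the defining relation $k_1(e_T-c)s=(k_{-1}+k_2)c$ into the first line of \eqref{mmo} collapses $\dot s$ to $k_0-k_2 c$, which is nonnegative because either $c\leq e_T<k_0/k_2$ (no-equilibrium case) or $c\leq \widehat c=k_0/k_2$ (equilibrium case, using the bound from part (a)). On the lower boundary one has either $c=\mathcal{N}_s(s)\geq 0$---in which case $\dot s=0$ and $\dot c\geq 0$ precisely because $c\leq \mathcal{N}_c(s)$---or $c=0$, where $\dot c=k_1 e_T s\geq 0$ and $\dot s=k_0-k_1 e_T s\geq 0$ (the latter being equivalent to $\mathcal{N}_s(s)\leq 0$). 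The corner $s=k_0/(k_1 e_T)$ joining the two lower pieces, the left end $s=0$, and (when it exists) the stationary point $(\widehat s,\widehat c)$ are handled by direct inspection.

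The second assertion of (b), namely $\dot s\geq 0$ throughout $W_1$, is then immediate from the equivalence $\dot s\geq 0 \Leftrightarrow c\geq \mathcal{N}_s(s)$ recorded in \eqref{isos}, combined with the lower bound $c\geq \max\{0,\mathcal{N}_s(s)\}\geq \mathcal{N}_s(s)$ built into the definition of $W_1$. I do not expect any genuine obstacle here; the only thing requiring real care is the bookkeeping between the two regimes (no positive equilibrium versus a positive equilibrium) and the piecewise nature of the lower boundary, so that in both regimes one applies the same upper bound on $c$ from part (a) in order to sign-control the reduced expression $k_0-k_2 c$ on the $c$-nullcline.
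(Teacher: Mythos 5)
Your proof is correct and follows essentially the same route as the paper's: part (a) by direct comparison of the nullclines (which the paper dismisses as straightforward), and part (b) by checking that the field points inward on each boundary piece, with the decisive inequality being $k_0-k_2c\geq 0$ on $W_1$ --- exactly the paper's observation that $\dot s+\dot c=k_0-k_2c\geq k_0-k_2\widehat c=0$. The only cosmetic difference is that the paper extracts both boundary sign conditions at once from that single identity, whereas you verify the lower boundary separately via the isocline characterization \eqref{isoc}.
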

\begin{proof} Part (a) is straightforward. As for part (b),
from \eqref{mmo} one sees that $\dot s +\dot c=k_0-k_2c\geq k_0-k_2\widehat c= 0$ on $W_1$, thus
\[
\dot s=0\Rightarrow \dot c\geq 0, \quad \dot c=0\Rightarrow \dot s\geq 0.
\]
This implies the positive invariance of $W_1$, since the vector field points to the interior of $W_1$ at the boundary (Figure~\ref{fig:wedge}). Clearly $\dot s\geq 0$ on $W_1$.
\end{proof}
\begin{figure}
    \centering
    \includegraphics[scale=0.9]{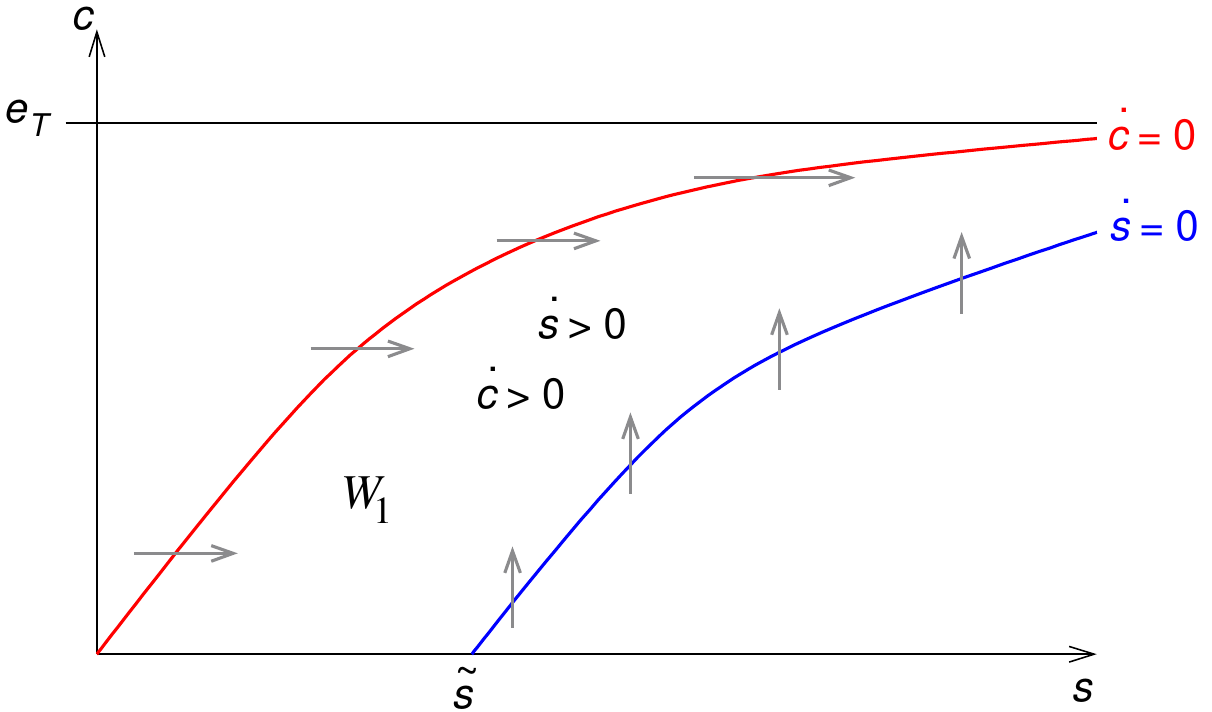}
    \includegraphics[scale=0.9]{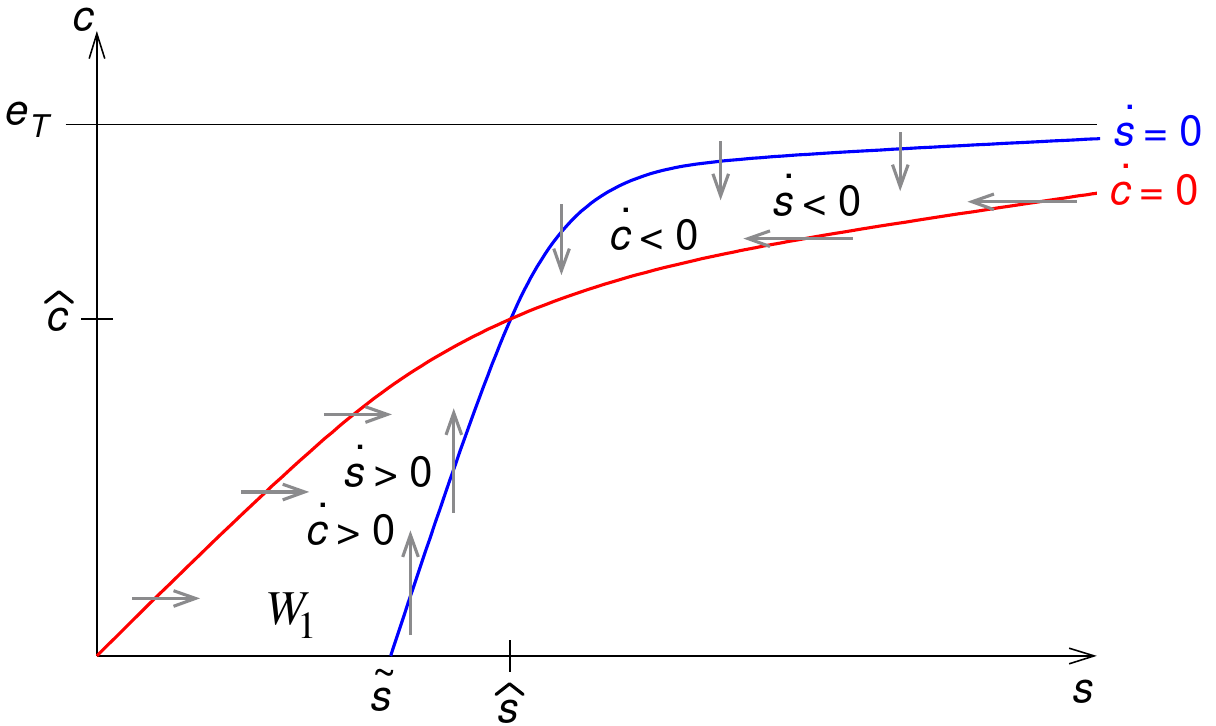}
    \caption{Sketches of the positively invariant sets $W_1$ in the phase plane for the open Michaelis--Menten reaction mechanism~(\ref{mm1}). The curves are the nullclines, and the arrows show the direction of motion of trajectories as they cross the nullclines. Both nullclines tend asymptotically to $c=e_T$ as $s\rightarrow\infty$. $\tilde{s}$ is the $s$~intercept of the $s$~nullcline. Upper: $k_0>k_2e_T$ and the two nullclines never meet. Lower: $k_2e_T>k_0$ and the nullclines cross at the stationary point $(\widehat{s},\widehat{c})$. The flow points into the region delimited by the two nullclines, making this region a funnel~\cite{HW1991}. 
    \label{fig:wedge}}
\end{figure}

\begin{remark}\label{RMK1}
Smallness of $e_T$ and existence of a positive stationary point imply smallness of $k_0$; this leads automatically to the singular perturbation setting. Matters are different when $k_1$ is small.
\end{remark}

Everywhere inside the wedge, $\dot{c}>0$ and $\dot{s}>0$. Thus, all trajectories inside the wedge have positive slope. Since the flow points into the wedge, the slow manifold must also lie inside the wedge. Thus, the slow manifold has a positive slope for $s\le\widehat{s}$ in the first quadrant. Moreover, the slow manifold must enter the first quadrant by crossing through the $s$~axis in the interval $(0,\tilde{s})$, where $\tilde{s}=k_0/k_1e_T$ is the $s$~intercept of the $s$~nullcline (Figure~\ref{fig:wedge}).

In the case that there is a positive equilibrium point, for $s>\widehat{s}$, $\dot{c}<0$ and $\dot{s}<0$ between the two nullclines so that trajectories in this region still have positive slope. The flow is, again,  into the region between the two nullclines (Figure~\ref{fig:wedge}), so the slow manifold must lie within this region. The slow manifold therefore has positive slope here as well. Moreover, $\lim_{s\rightarrow\infty}\mathcal{N}_c(s)=\lim_{s\rightarrow\infty}\mathcal{N}_s(s)=e_T$. Thus, the two nullclines pinch together asymptotically. Although we do not pursue this idea here, this property would allow the antifunnel theorem to be used to prove the existence of a unique slow manifold to the right of the equilibrium point~\cite{CS2008,HW1991} (see, Section~\ref{sec:poin} for correspondence to the global behavior).

\subsection{How small is \textit{small}: A direct estimate}\label{directEST}

Given that we are interested in obtaining a condition that ensures phase plane trajectories closely follow the QSS variety corresponding to the $c$-isocline (nullcline),  we compute an upper bound on the limit supremum ($\limsup$) of 
\[
L:=|c-w(s)|
\]
for a solution of \eqref{mmo}, where $w(s)$ is given by \eqref{qssvar}. To determine such an upper bound, we calculate
\begin{equation}\label{Eder}
    \cfrac{1}{2}\cfrac{d}{dt}L^2 =(c-h(s))(\dot{c}-w^\prime (s)\dot{s}).
\end{equation}
The derivative $\dot{c}$ given in (\ref{mmo}) factors nicely
\begin{equation}\label{fac}
\dot{c} = -k_1(s+K_M)(c-w(s))=:-\tau(s)(c-w(s)),
\end{equation}
and substitution of (\ref{fac}) into (\ref{Eder}) yields
\begin{subequations}
\begin{align}
\cfrac{1}{2}\cfrac{d}{dt}L^2 &=-\tau(s)L^2 - (c-h(s))(w^\prime(s)\dot{s})\\
&\leq -\tau_0L^2 + |L|\max{|w^\prime(s)}|\max|\dot{s}|, \quad \tau_0:=\tau(0).
\end{align}   
\end{subequations}
Differentiating $w(s)$ with respect to $s$ reveals $\max |w^\prime(s)|=k_1e_T/(k_{-1}+k_2)$. Denote $\max|\dot{s}|$ by $v$ and note that $v\leq k_0$ on $W_1$, due to $\dot s\geq 0$.

With
\[
\eps_c:=\cfrac{k_1e_T}{k_{-1}+k_2},
\]
Cauchy's inequality
\begin{equation}
    ab \leq \sigma a^2 + \cfrac{b^2}{4\sigma}, \quad \forall  \sigma >0
\end{equation}
implies
\begin{equation}
    \eps_c v |L| \leq \sigma L^2 + \cfrac{(\eps_c v)^2}{4\sigma}\quad \forall \sigma >0,
\end{equation}
which yields
\begin{equation}
  \cfrac{1}{2}\cfrac{d}{dt}L^2   \leq (\sigma-\tau_0)  L^2 + \cfrac{(\eps_c v)^2}{4\sigma}\quad \forall \sigma >0.
\end{equation}
A natural choice for $\sigma$ is $\sigma := \tau_0/2$ leading to the inequality
\begin{equation}\label{ineq}
    \cfrac{d}{dt}L^2   \leq -\tau_0 L^2 + \cfrac{(\eps_c v)^2}{\tau_0}.
\end{equation}
Applying Gronwall's lemma to (\ref{ineq}) generates an upper estimate for $L^2$:
\begin{proposition} 
\begin{enumerate}[(a)]
\item For every solution of \eqref{mmo} with initial value in $W_1$ one has the estimates
\begin{subequations}
\begin{align}
    L^2 &\leq L^2(0)e^{-\displaystyle \tau} + \cfrac{(\eps_c v)^2}{\tau_0^2}(1-e^{-\displaystyle \tau});\\
  L^2  &\leq L^2(0)e^{-\displaystyle \tau} + \cfrac{(\eps_c k_0)^2}{(k_{-1}+k_2)^2}
\end{align}
\end{subequations}
with $\tau:=\tau_0t=(k_{-1}+k_2)t$. 
\item Thus with
\begin{equation}
\varepsilon^*:=\cfrac{k_0k_1e_T}{(k_{-1}+k_2)^2},
\label{eq:epsilonstar}\end{equation}
the solution approaches the QSS variety up to an error of ${\varepsilon^*}^2$, with time constant $\tau_0=(k_{-1}+k_2)^{-1}$.
\end{enumerate}
\end{proposition}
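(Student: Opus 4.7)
The plan is to carry out the Gronwall comparison set up in \eqref{ineq} and then read off the two stated bounds in part (a), after which part (b) is a matter of identifying the right dimensionless combination.

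First I would set $y(t) := L^2(t)$ and treat the scalar differential inequality $y' \leq -\tau_0 y + (\eps_c v)^2/\tau_0$ (with $\tau_0 = k_{-1}+k_2$ as on p.~\pageref{ineq}) by the usual integrating-factor argument: multiplying by $e^{\tau_0 t}$, integrating from $0$ to $t$, and dividing through gives
\[
y(t) \leq y(0)\,e^{-\tau_0 t} + \frac{(\eps_c v)^2}{\tau_0^{2}}\bigl(1-e^{-\tau_0 t}\bigr).
\]
Rewriting this in terms of $\tau := \tau_0 t = (k_{-1}+k_2)\,t$ is the first inequality of part~(a).

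To pass from there to the second, cleaner inequality I would invoke Lemma~\ref{lemma:wedge}(b). On the positively invariant wedge $W_1$ both $\dot s$ and $\dot c$ are nonnegative, and summing \eqref{mmo} gives $\dot s + \dot c = k_0 - k_2 c \leq k_0$; combining these two facts forces $v = \max|\dot s| \leq k_0$. Substituting $v \leq k_0$ into the bound above, using $1-e^{-\tau}\leq 1$ in the forcing term and $\tau_0^2 = (k_{-1}+k_2)^2$, produces the second inequality in part~(a).

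Part~(b) is then essentially a rereading of the quantities involved: the coefficient $\eps_c\,k_0/(k_{-1}+k_2) = k_1 e_T k_0/(k_{-1}+k_2)^2$ is exactly the dimensionless parameter $\varepsilon^*$ defined in \eqref{eq:epsilonstar}, so the forcing term is $(\varepsilon^*)^2$ and letting $\tau \to \infty$ yields $\limsup_{t\to\infty} L^2 \leq (\varepsilon^*)^2$; the transient $L^2(0)\,e^{-\tau}$ decays on the characteristic time $(k_{-1}+k_2)^{-1}$, which is the advertised time constant.

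I do not anticipate a serious obstacle: the analytic heavy lifting (factoring $\dot c = -\tau(s)(c-w(s))$, controlling $|w'(s)|$, and the Cauchy splitting that produces \eqref{ineq}) has already been carried out in the derivation preceding the proposition. The one subtle point that must be honoured is the hypothesis that the initial value lies in $W_1$: without the sign information $\dot s \geq 0$, $\dot c\geq 0$ supplied by the positive invariance of $W_1$, the clean bound $v\leq k_0$ would not follow, and the final estimate would have to be expressed in terms of some less explicit bound on $|\dot s|$ along the orbit.
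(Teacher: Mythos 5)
Your proof is correct and follows exactly the route the paper takes: the Proposition there is presented as the direct output of applying Gronwall's lemma to the differential inequality \eqref{ineq}, with the bound $v\leq k_0$ on $W_1$ supplied by Lemma~\ref{lemma:wedge} and the preceding isocline discussion, and you have simply written out the integrating-factor computation and the identification $\eps_c k_0/(k_{-1}+k_2)=\varepsilon^*$. One small caution: $\varepsilon^*$ as defined in \eqref{eq:epsilonstar} is \emph{not} dimensionless (it carries units of concentration, as the paper points out immediately after the Proposition when motivating the passage to $\varepsilon_o$), so your phrase ``the dimensionless parameter $\varepsilon^*$'' should be dropped.
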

Note that the estimates from the proposition explain the rapid approach of the trajectories in Figure \ref{fig:traj} to the QSS variety.

From our analysis of the mathematical energy, $L^2$, we have both a time constant, $\tau_0$, as well as a parameter, $\varepsilon^*$. The time constant is a natural \textit{dimensional} fast time scale, $\tau$, that is equivalent to the fast time scale obtained by Segel \cite{Segel1988} for the closed Michaelis--Menten reaction mechanism. Moreover, $\eps^*$ should in some sense be small for the open sQSSA to be accurate. The difficulty here is that $\eps^*$ has dimension, and we must scale $\eps^*$ appropriately to recover a dimensionless parameter. To scale, note that if $k_0<k_2e_T$, then
\begin{equation}\label{ineq2}
 \cfrac{e_T k_0}{K_M(k_{-1}+k_2)}  < \cfrac{k_2 e_T^2}{K_M(k_{-1}+k_2)}.
\end{equation}
Since $c \leq e_T$, we divide the (\ref{ineq2}) through by $e_T$, and take the inequality,
\begin{equation}\label{epsilon}
 \varepsilon_o:=\cfrac{k_2 e_T}{K_M(k_{-1}+k_2)} \ll 1, 
\end{equation}
to be the general qualifier for the validity of open sQSSA (\ref{osQSSA}) in $W_1$, when a finite stationary point is located in the first quadrant.

Note that $\eps_o$ vanishes if either $k_1$, $e_T$ or $k_2$ vanish. However, the use of Fenichel theory also requires $k_0$ to vanish in the singular limit, otherwise the perturbation is non-singular and the accuracy of a specific QSS reduction is attributable \textit{only} to the near-invariance of the QSS manifold (hence the difference in the justification of the open sQSSA that occurs from the mapping $(k_0,e_T)\mapsto \eps(k_0^*, e_T^*)$ versus the mapping $(k_0,e_T) \mapsto (k_0,\eps e_T^*)$). This observation is a definitive difference between our work and that of Stoleriu et al.~\cite{Stoleriu2004}.


\section{Additional insights from solutions of the invariance equation}\label{sec:invar}
The sQSSA is an attempt to approximate the slow invariant manifold. There are many other methods for approximating the slow manifold, ranging from the method of intrinsic low-dimensional manifolds~\cite{Maas1992}, which is accurate to $O(\varepsilon)$~\cite{KK2002}, to methods that can be improved order-by-order such as singular-perturbation theory~\cite{BAO1963,Heineken1967,Segel1989}, computational singular perturbation theory~\cite{Lam1993}, and Fraser's iterative method~\cite{Fraser1988,Nguyen1989}. Here, we study solutions of the invariance equation, the equation that the exact slow manifold satisfies, in order to gain further insights into the role of the TFPV in determining the validity of the sQSSA. The Fraser iterative method will be a major tool, but we will also consider various small-parameter expansions of the iterates.

\subsection{The invariance equation}
Assume that, in accordance with the arguments in Section~\ref{sec:pp}, and the visualization of Figure~\ref{fig:wedge}, we can represent the slow manifold (at least locally) as the graph of a function $c = \mathcal{C}(s)$. If $\dot{s}=\dot{s}(s,c)$ and $\dot{c}=\dot{c}(s,c)$, then differentiating the assumed representation of the slow manifold with respect to time, we get
\begin{equation}
    \dot{c}(s,\mathcal{C}) = \frac{d\mathcal{C}(s)}{ds}\,\dot{s}(s,\mathcal{C}),
\label{eq:invar0}\end{equation}
the invariance equation \cite{Fraser1988,Gorban2018,GKZ2004b,kuehn2015,Roberts89b}.

The invariance equation could be solved using a perturbation method. A strategy suggested by the work of the previous sections is to perturb from a TFPV along a curve in parameter space with the TFPV as its endpoint, e.g.~the ray~(\ref{ray}). The scaling parameter $\varepsilon$ can then serve as a perturbation parameter, and a perturbation problem of the typical form results, i.e.\ to compute the $i$'th term in the perturbation series, we solve an \emph{algebraic} equation that only depends on the previous terms. However, suppose that we did not know about TFPVs. Then we might try to use the same small parameter as in the closed system, viz.\ some scaled version of $e_T$ \cite{BH1925,Heineken1967,Segel1988,Stoleriu2004}. In the current framework, we would write $e_T\mapsto\varepsilon e_T^*$, and expand $\mathcal{C}(s)=\chi_1(s)\varepsilon + \chi_2(s)\varepsilon^2 +\ldots$ If we implement this program, we find that $\chi_1(s)$ satisfies the \emph{differential} equation
\begin{equation}
    \frac{d\chi_1}{ds} = \frac{1}{k_0}\left[k_1e_T^*s-\chi_1(k_1s+k_{-1}+k_2)\right].
\end{equation}
Higher-order terms also satisfy differential rather than algebraic equations. These difficulties are linked to the fact that $e_T$, of itself, is not a TFPV for the open system. For the TFPVs (\ref{Cmank0eT}) and~(\ref{Cmank0k1}), since the leading-order term in $\mathcal{C}(s)$ is~$O(\varepsilon)$, rescaling the TFPVs balances the terms in the invariance equation such that, to leading order, $\dot{c}$, $\dot{s}$ and $d\mathcal{C}/ds$ are all~$O(\varepsilon)$. As a result, (with slight abuse of notation) $d\chi_i/ds$ first appears to~$O(\varepsilon^{i+1})$, and we obtain an algebraic equation for~$\chi_i$. The case of TFPV~(\ref{Cmank0k2}) is slightly different. If we rescale $(k_0,k_2)\mapsto\varepsilon(k_0^*,k_2^*)$ and take $\mathcal{C}(s)=\zeta_0(s)+\zeta_1(s)\varepsilon+\zeta_2(s)\varepsilon+\ldots$, the $\varepsilon^0$ terms of the invariance equation can be rearranged to
\begin{equation}
    \left[k_1s(e_T-\zeta_0)-k_{-1}\zeta_0\right]\left(1+\frac{d\zeta_0}{ds}\right) = 0.
\label{zeta0eq}\end{equation}
The term in square brackets gives us the critical manifold~(\ref{Cmank0k2}) for $\zeta_0$ (the other solution, $d\zeta_0/ds = -1$, gives the fast foliations of the manifold in the limit $\varepsilon\rightarrow 0$). At higher orders, $\zeta_i$ first appears with the $O(\varepsilon^i)$ terms. However, because in the limit $\varepsilon\rightarrow 0$ for this TFPV set, the $k_0$ term in $\dot{s}$ vanishes, the coefficient of $d\zeta_i/ds$ at  $O(\varepsilon^i)$ is the term in square brackets in equation~(\ref{zeta0eq}), which vanishes. Thus, $d\zeta_i/ds$ first appears with a non-vanishing coefficient at~$O(\varepsilon^{i+1})$, and we again have a perturbation problem involving only algebraic equations.

To recapitulate, rescaling the TFPVsyields a tractable perturbation problem precisely because the TFPVs define critical manifolds. Choosing any  path through parameter space that does not reduce to a TFPV as $\varepsilon\rightarrow 0$ will, by contrast, necessarily yield a troublesome perturbation problem.

Each TFPV set yields a different perturbation problem. Rather than studying the perturbation expansions of the slow manifold directly, we turn to Fraser's method~\cite{Fraser1988,Nguyen1989}, which will allow us to compute a sequence of approximations in a TFPV-agnostic manner. Series expansions of the approximations can then be obtained for any desired TFPV scaling parameter.

In Fraser's iterative method, we think of the invariance equation as an equation to be solved for $\mathcal{C}$ in terms of $d\mathcal{C}/ds$.
In this case, we can explicitly rearrange the invariance equation to the functional equation~\cite{Fraser1988}
\begin{equation}
    \mathcal{C} = \frac{\displaystyle k_1e_Ts\left(1+\frac{d\mathcal{C}}{ds}\right) - k_0\frac{d\mathcal{C}}{ds}}{(k_1s+k_{-1})\left(1+\displaystyle\frac{d\mathcal{C}}{ds}\right) + k_2}.
\label{eq:invar}\end{equation}
Observe that if we rescale $k_0$ and $e_T$ as in~(\ref{ray}) and let $\varepsilon\rightarrow 0$ in the functional equation, we recover the critical manifold~(\ref{Cmank0eT}). Similar comments can be made for $(k_0,k_1)$ and $(k_0,k_2)$ and the corresponding critical manifolds (\ref{Cmank0k1}) and~(\ref{Cmank0k2}), respectively. Thus, the critical manifolds are recovered in suitable limits of the functional equation. This reinforces the special relationship of the TFPVs to perturbative solutions of the invariance equation.

We now want to solve equation~(\ref{eq:invar}) in some way that constrains the calculation to represent the slow manifold, which is potentially an issue because every trajectory that can locally be represented in the form $c=\mathcal{C}(s)$ is a solution of the invariance equation. If we knew the derivative of $\mathcal{C}$ with respect to~$s$ along the slow manifold, we could immediately compute $\mathcal{C}(s)$ from~(\ref{eq:invar}). Since we do not, we solve the invariance equation by iteration: From some initial guess $\mathcal{C}_0(s)$, we compute the derivative, substitute it into~(\ref{eq:invar}) to obtain $\mathcal{C}_1(s)$, and iterate.
Despite the potential to find another trajectory by this procedure, in practice, we find that iterative solution of a functional equation such as~(\ref{eq:invar}) tends to converge specifically to the slow manifold~\cite{Fraser1988,Roussel:1990:GSSA} if it converges at all~\cite{Roussel97}.

The critical manifolds associated with the TFPVs suggest potential initial functions for iteration. Suppose then that we start iteration from the critical manifold [under either TFPV (\ref{Cmank0eT}) or~(\ref{Cmank0k1})] $\mathcal{C}_0(s) = 0$. Then $\mathcal{C}_1(s)$ is the sQSSA~(\ref{eq:c_of_s}). Figure~\ref{fig:iterates} shows a sequence of iterates calculated from this initial function. Convergence is rapid, although much more so away from the $s$~axis.
\begin{figure}
    \centering
    \includegraphics{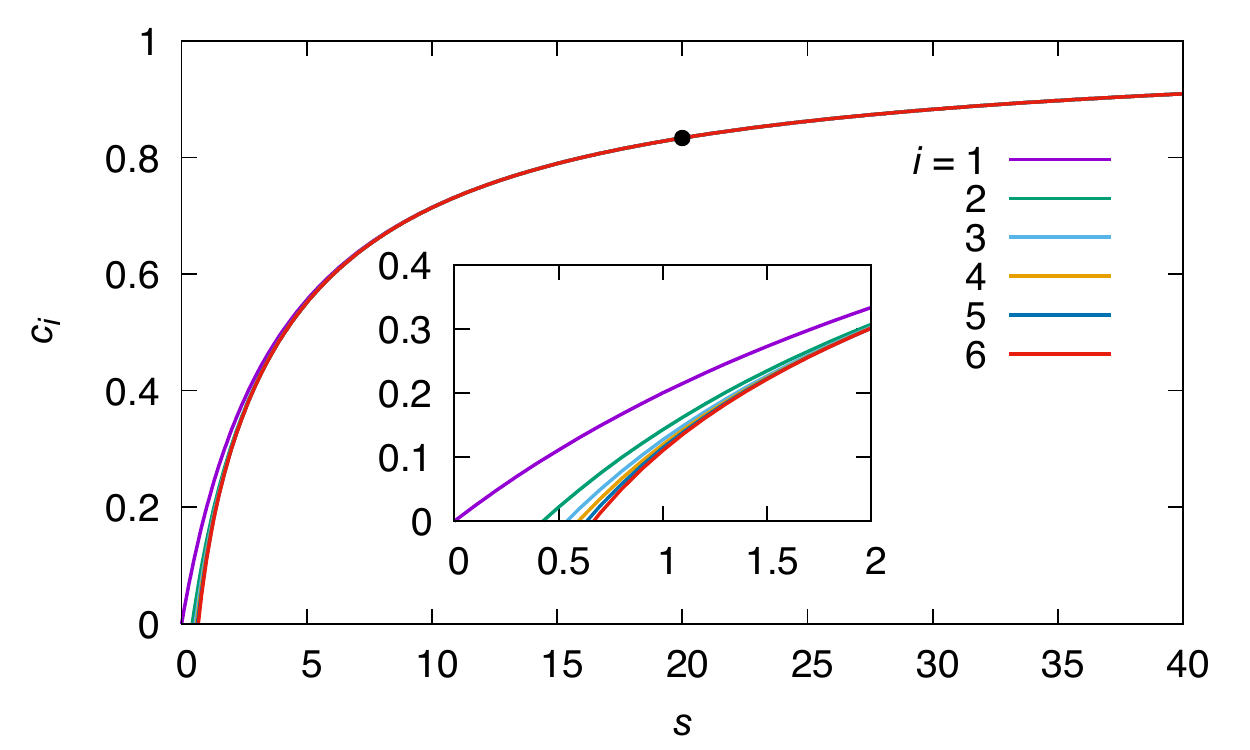}
    \caption{Iterates of equation~(\ref{eq:invar}) for the open Michaelis--Menten reaction mechanisms starting from the initial function $\mathcal{C}_0(s) = 0$ for the parameters of Figure~\ref{fig:traj}(a). The solid dot marks the location of the equilibrium point. The inset shows an expanded view of the behavior of the iterates near origin.
    \label{fig:iterates}}
\end{figure}

As a side note, consider using a vertical initial function, i.e.\ one for which ${d\mathcal{C}/ds =\infty}$. The first iterate from such an initial function is the $s$~nullcline, which
intercepts the $s$~axis at $s=k_0/k_1e_T$, i.e.\ at the extreme right end of the possible range  of $s$~intercepts of the slow manifold. The sQSSA, on the other hand, is the $c$~nullcline, obtained in one iterative step from the initial function $\mathcal{C}_0(s)=0$, and it intercepts the $s$ axis at $s=0$. The two nullclines thus arise naturally as approximations of the slow manifold by iteration from coordinate axes, and serve as upper and lower bounds for the slow manifold. Similar comments about the relationship of the functional equation to the nullclines have previously been made about closed systems~\cite{Fraser1988,FR94,Nguyen1989}.

\subsection{The TFPVs $(k_0,e_T)$ and the small parameters revisited}
If we obtain higher iterates using a symbolic algebra system, then make the substitution~(\ref{ray}), and finally expand in powers of $\varepsilon$, we find that the $i$'th iterate is consistent with the previous iterate to order $\varepsilon^{i-1}$. In other words, the iterative method builds the perturbation series term-by-term, as was previously observed for various perturbative solutions of the closed system~\cite{KK2002,Roussel:1990:GSSA}. However, this property does \textbf{not} hold if we, for instance, expand in powers of $e_T$, since $e_T$ is not, of itself, a TFPV for the open system. These properties parallel those of the direct perturbation calculations.

The first two non-zero terms of the perturbation series computed along the ray~(\ref{ray}) can be written as follows:
\begin{equation}
    \frac{\mathcal{C}(s)}{e_T^*} = \frac{s}{s+K_M}\varepsilon + \frac{K_M\left[s(k_2e_T^*-k_0^*)-k_0^*K_M\right]}{k_1(s+K_M)^4}\varepsilon^2+O(\varepsilon^3).
\label{eq:expank0eT}\end{equation}
Division by $e_T^*$, the nominal value of the enzyme concentration, has made this expression dimensionless. Thus, the $\varepsilon^2$ term
represents an error term for the sQSSA. Specifically, the absolute value of the coefficient of $\varepsilon^2$,
\begin{equation}
    \delta(s) = \left|\frac{K_M\left[s(k_2e_T^*-k_0^*)-k_0^*K_M\right]}{k_1(s+K_M)^4}\right|,
\label{eq:delta}\end{equation}
is a dimensionless error parameter such that the error in the sQSSA is small provided this coefficient is small. An elementary calculation shows that $\delta(s)$ has a local maximum of
\begin{equation}
    \delta_m = \frac{27k_2e_T^*\left(1-\frac{k_0^*}{k_2e_T^*}\right)^4}{256k_1K_M^2}
\label{def:deltam}\end{equation}
in $s\in(0,\infty)$ provided $k_2e_T^*>k_0^*$. The global maximum of $\delta(s)$ for $s\ge 0$ is either this local maximum or
\begin{equation}
    \delta(0)=\varepsilon^*/e_T = \frac{k_0^*}{k_1K_M^2},
\end{equation} 
where the dimensional parameter $\varepsilon^*$ is defined in equation~(\ref{eq:epsilonstar}). 
When the inflow exceeds the enzyme's clearance capacity, the situation is straightforward, and $\delta(0)$ is the correct small parameter.
Otherwise, we need to establish the parameter conditions under which one or the other of the values of $\delta$ is maximal. Thus, $\delta_m$ will be larger than $\delta(0)$ when
\begin{equation}
    \frac{27}{256}\left(1-\frac{k_0}{k_2e_T}\right)^4 > \frac{k_0}{k_2e_T}.
\end{equation}
We dropped the asterisks here because $k_0^*/k_2e_T^* = k_0/k_2e_T$.
This inequality can be solved numerically. It yields $k_0/k_2e_T<0.0767$. Putting it all together, we have the following:
\begin{itemize}
    \item The sQSSA is a good approximation to the slow manifold globally if $k_0/k_2e_T<0.0767$ and
        $\delta_m\ll 1$. Comparing equation~(\ref{def:deltam}) to~(\ref{epsilon}), and noting that in this parameter range, $\delta_m<\frac{27}{256}\varepsilon_o$, we conclude that $\varepsilon_o\ll 10$ is sufficient for the validity of the sQSSA. This is a somewhat more permissive bound than~(\ref{epsilon}).
    \item If $k_0/k_2e_T>0.0767$, then $\delta(0)\ll 1$ is the appropriate condition for the validity of the sQSSA in the open system.
\end{itemize}
Note that this analysis has recovered both of the small parameters identified in Section~\ref{directEST}, but has also established a sharp boundary for switching from one small parameter to the other. We thus have two complementary methods to obtain small parameters. In any given problem, one or the other method might be unworkable, thus our presentation of both methods here.

\subsection{The TFPVs $(k_0,k_2)$ and the equilibrium approximation}\label{sec:k0k2manif}
We can also expand the iterates using the small parameter implied by~(\ref{Cmank0k2}). If we take $(k_0,k_2)\mapsto\varepsilon(k_0^*,k_2^*)$, and then expand the second (or higher) iterate in powers of $\varepsilon$, we get
\begin{equation}
    \frac{\mathcal{C}(s)}{e_T} = \frac{s}{s+K_E}-\frac{K_E\left(k_2^*s+k_0^*\right)+k_2^*s^2}{k_1\left(s+K_E\right)\left[(s+K_E)^2+K_Ee_T\right]}\varepsilon + O(\varepsilon^2),
\label{eq:expank0k2}\end{equation}
where $K_E=k_{-1}/k_1$. Note that the $O(\varepsilon^0)$ term is the classical quasi-equilibrium approximation (QEA) for the Michaelis--Menten reaction mechanism. Contrast equations (\ref{eq:expank0eT}) and~(\ref{eq:expank0k2}): The QEA for the open system is only accurate to order $\varepsilon^0$, unlike the sQSSA which is accurate to order $\varepsilon$ tThis fact is also reflected in Remark \ref{qssprem}(b) and the second example in Appendix \ref{AppendA}).  This is easily understood given that the QEA lies above the sQSSA at any $s>0$, and that the slow manifold, which enters the first quadrant by passing through the positive $s$ semi-axis, lies below the sQSSA for $s<\widehat{s}$. In the interval $s\in[0,\widehat{s}]$, the sQSSA will therefore always be closer to the slow manifold than the QEA. This is  unlike the situation in the closed system, where the slow manifold lies between the QEA and sQSSA, and where it is possible to choose parameters such that one or the other approximation is more accurate near the origin. The difference is that the QEA is a nullcline in the closed system, but not in the open system. One implication of this result is that the TFPV~(\ref{Cmank0eT}) is the most natural one to use as a basis for a geometric singular perturbation treatment of the slow manifold (see also Appendix~\ref{AppendA} for further notes on the expansion from the TFPV~(\ref{Cmank0k2})).

\subsection{The TFPVs $(k_0,k_1)$ and the linear regime}
Finally, turning to  the TFPV (\ref{Cmank0k1}), we define a perturbation parameter $\varepsilon$ by
\begin{equation}
  (k_0,k_1)\mapsto\varepsilon(k_0^*,k_1^*).  
\label{eq:k0k1eps}\end{equation}
A perturbation series based on this small parameter is a polynomial in $s$ due to the appearance of $k_1$ and~$s$ together in the rate equations. The first nonzero terms of this series are
\begin{equation}
    \frac{\mathcal{C}(s)}{e_T} = \frac{k_1^*s}{k_{-1}+k_2}\varepsilon - \frac{k_1^*\left\{k_1^*s\left[s(k_{-1}+k_2)-k_2e_T\right]+k_0^*(k_{-1}+k_2)\right\}}{(k_{-1}+k_2)^3}\varepsilon^2 + O(\varepsilon^3).
\end{equation}
Substituting this series along with the parameter definitions~(\ref{eq:k0k1eps}) into $\dot{s}$ from~(\ref{mmo}), we get, to lowest order in $\varepsilon$,
\begin{equation}
    \dot{s} \approx \left(k_0^* - \frac{v_\mathrm{max}s}{K_M^*}\right)\varepsilon,
\end{equation}
where $v_\mathrm{max}=k_2e_T$ and $K_M^*=(k_{-1}+k_2)/k_1^*$ or, restoring the small parameters from~(\ref{eq:k0k1eps}),
\begin{equation}
    \dot{s} \approx k_0 - \frac{v_\mathrm{max}s}{K_M}.
\end{equation}
This is of course the small-$s$ linear limit of the sQSSA, the previously seen equation~(\ref{linlimit1}). An alternative route to this equation is presented in Appendix~\ref{AppendA}.

\section{The open Michaelis--Menten reaction mechanism on the Poincar\'e sphere}\label{sec:poin} From a general perspective, it seems worthwhile to consider the global behavior of system~\eqref{mmo} and its distinguished invariant sets to illuminate the role of QSS varieties in a broader context. Proceeding in this manner seems particularly appropriate for systems which do not admit a stationary point in the first quadrant.

It is a standard technique to extend planar polynomial ODE systems to the Poincar\'e sphere. A good description of the procedure is given in Perko \cite{Perko2001}, Section 3.10: Given a sphere in $\mathbb R^3$, let the phase plane be tangent to its north pole, and consider the bijective central projection from the upper half sphere to the phase plane. Then points on the equator of the sphere may be viewed as points at infinity for the planar system, with each line through the origin corresponding to a pair of antipodal points on the equator (the central projection also yields a bijection from the lower hemisphere to the phase plane, and one thus obtains a vector field on the sphere which is mirror symmetric relative to the equatorial plane, and has the equator as an invariant set. One could furthermore pass to a direction field on the projective plane, but we will not do so). Finally, for the purpose of visualization one applies a parallel projection in the north-south direction from the upper hemisphere to the equatorial plane. 

A discussion of the system on the Poincar\'e sphere thus allows us to understand the behavior of the planar system at infinity. Note that all solutions of the system on the Poincar\'e sphere, which is compact, exist for all $t\in\mathbb R$, while this is not necessarily the case for solutions of \eqref{mmo} when $t\leq 0$ or outside the first quadrant. Any reference to limit sets in the following arguments is to be understood for the system on the sphere. In our analysis we will mostly be interested in the first quadrant.

Stationary points at infinity (i.e.\ on the equator) for a polynomial planar system are of particular interest. Antipodal pairs of stationary points generally correspond to invariant lines for the homogeneous part of highest degree; see e.g.~\cite{Walcher2000}. For system \eqref{mmo} with $k_1\not=0$ we thus need to consider the homogeneous quadratic part
\[
\begin{array}{rcl}
\dot s&=& k_1cs,\\
\dot c&=& -k_1cs.
\end{array}
\]
This homogeneous vector field admits three invariant lines, viz.
\[
\mathbb R\cdot\begin{bmatrix} 1\\ 0\end{bmatrix},\quad \mathbb R\cdot\begin{bmatrix} 0\\ 1\end{bmatrix},\quad \mathbb R\cdot\begin{bmatrix} 1\\ -1\end{bmatrix}.
\]
The stationary points at infinity which are relevant for the first quadrant correspond to the rays
\[
\mathbb R_+\cdot\begin{bmatrix} 1\\ 0\end{bmatrix} \text{  and  } \mathbb R_+\cdot\begin{bmatrix} 0\\ 1\end{bmatrix},
\]
and we call the corresponding stationary points at infinity $P_1$, resp. $P_2$. Moreover we denote by $P_3$ the stationary point at infinity which corresponds to $\mathbb R_+\cdot\begin{bmatrix} 1\\ -1\end{bmatrix} $. 

We present the pertinent results for system \eqref{mmo} on the Poincar\'e sphere (see Appendix \ref{poincapp} for computations and proofs).
\begin{lemma}\label{ptlem0}Assume that the genericity conditions \eqref{genercond} are satisfied. Then the following hold for the system on the Poincar\'e sphere.
\begin{enumerate}[(a)]
\item The stationary point $P_1$ at infinity is a degenerate saddle when $k_2e_T>k_0$, with the stable manifold contained in the equator. In case $k_2e_T<k_0$ this point is a degenerate attracting node.
\item The stationary point $P_2$ at infinity is a saddle-node, with a repelling node part on the upper hemisphere.
\item The stationary point $P_3$ at infinity is a repelling node.
\end{enumerate}
\end{lemma}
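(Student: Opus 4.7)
The plan is to push system~\eqref{mmo} to the Poincar\'e sphere via the two standard charts, inspect the linearization of the resulting polynomial vector field at each of $P_1$, $P_2$, $P_3$, and resolve the non-hyperbolic cases by a one-dimensional center manifold reduction. Since \eqref{mmo} has degree two, after the chart substitution a single time rescaling $d\tau = dt/z$ produces a polynomial vector field with the equator $\{z=0\}$ as an invariant set, so no further desingularization is needed.

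I would use chart $U_1$ with $s = 1/z$, $c = u/z$, in which $P_1$ sits at $(u,z)=(0,0)$ and $P_3$ at $(u,z)=(-1,0)$, and chart $U_2$ with $c = 1/z$, $s = v/z$, in which $P_2$ sits at $(v,z)=(0,0)$. A direct computation at $P_3$ yields the upper-triangular Jacobian
\[
J_{P_3} = \begin{pmatrix} k_1 & k_2\\ 0 & k_1\end{pmatrix},
\]
a Jordan block with double eigenvalue $+k_1$; by \eqref{genercond}, $k_2\neq 0$, so $P_3$ is an improper repelling node, yielding part~(c). At $P_2$ the Jacobian
\[
J_{P_2}=\begin{pmatrix} k_1 & k_{-1}\\ 0 & 0\end{pmatrix}
\]
has the positive eigenvalue $k_1$ acting along the equator and a zero eigenvalue demanding a one-dimensional center manifold, tangent to the direction $(-k_{-1},k_1)$. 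Writing $v = -(k_{-1}/k_1)\,z + O(z^2)$ and substituting into $dz/d\tau$ yields, after cancellation, the reduced flow $dz/d\tau\big|_{\mathrm{cm}} = k_2\,z^2 + O(z^3)$; since $k_2>0$, this has the saddle-node signature with a repelling node on the upper hemisphere ($z>0$), establishing part~(b).

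The main obstacle is $P_1$. Here the Jacobian is
\[
J_{P_1} = \begin{pmatrix} -k_1 & k_1 e_T\\ 0 & 0\end{pmatrix},
\]
with stable eigendirection $(1,0)$ lying in the equator and center direction $(e_T,1)$ transverse to it. The leading-order center manifold $u = e_T z + O(z^2)$ makes the $z^2$ contribution to the polynomial expression $dz/d\tau = -k_0 z^3 + k_1 e_T z^2 - k_1 u z - k_{-1} u z^2$ cancel exactly, so one must push the center manifold expansion one further order, where the invariance equation yields $u = e_T z - \tfrac{(k_{-1}+k_2)\,e_T}{k_1}\,z^2 + O(z^3)$. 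Substituting and collecting terms produces the leading reduced flow $dz/d\tau\big|_{\mathrm{cm}} = (k_2 e_T - k_0)\,z^3 + O(z^4)$. Since the sign of $z^3$ changes across the equator while the factor $k_2 e_T - k_0$ does not, the center manifold is repelling on both sides of $\{z=0\}$ when $k_2 e_T > k_0$ and attracting on both sides when $k_2 e_T < k_0$. Combined with the stable eigendirection along the invariant equator, $P_1$ is then a degenerate saddle whose stable manifold is forced onto $\{z=0\}$ in the first case, and a degenerate attracting node in the second, establishing part~(a).
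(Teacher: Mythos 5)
Your proposal is correct and follows essentially the same route as the paper: the same two Poincar\'e charts, the same Jacobians at $P_1$, $P_2$, $P_3$, and the same one-dimensional reductions $\dot z = (k_2e_T-k_0)z^3+\cdots$ at $P_1$ and $\dot z = k_2z^2+\cdots$ at $P_2$ (your second-order center manifold coefficient $-(k_{-1}+k_2)e_T/k_1$ checks out against the invariance equation). The only difference is cosmetic: you carry out the center manifold expansion directly, whereas the paper diagonalizes the linear part and quotes a normal-form-on-invariant-manifold formula from \cite{Walcher1993} to read off the same cubic and quadratic coefficients.
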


We first describe the behavior of system \eqref{mmo} on the relevant part of the  Poincar\'e sphere when there is an isolated stationary point in the first quadrant; see also Figure \ref{fig:PoinA}.
\begin{proposition}\label{psprop1}
Assume that the genericity conditions \eqref{genercond} hold, and let $k_2e_T>k_0$. Then every solution starting in the first quadrant converges toward $P_0=(\widehat s,\,\widehat c)$ as $t\to \infty$. There is a unique distinguished trajectory that connects the saddle $P_1$ at infinity to $P_0$. Moreover this trajectory is asymptotic in the phase plane to the line $c=e_T$ as $t\to-\infty$.
\end{proposition}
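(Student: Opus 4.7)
\emph{Global convergence.} My plan is to extend \eqref{mmo} to the Poincar\'e sphere, where every orbit has a nonempty, connected, compact $\omega$-limit, and then apply the Poincar\'e--Bendixson theorem on this compact 2-manifold. A trajectory starting in the first quadrant stays there by Lemma~\ref{basicfacts}(c), so its $\omega$-limit lies in the closed first-quadrant region of the sphere; in that region the only stationary points are $P_0$, $P_1$, and $P_2$. Closed orbits in the planar interior are excluded by Lemma~\ref{basicfacts}(d), and the equator is not a periodic orbit since it is broken into arcs by the fixed points $P_1,P_2,P_3$. By Lemma~\ref{ptlem0}(a), the stable set of $P_1$ sits on the equator, and by Lemma~\ref{ptlem0}(b) the upper-hemisphere branch of $P_2$ is repelling; consequently neither point can be the $\omega$-limit of an orbit from the planar interior, nor can such an orbit converge to a heteroclinic cycle through them. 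This leaves $\{P_0\}$ as the only possible $\omega$-limit, giving the first assertion.

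\emph{Distinguished trajectory and asymptote.} To analyze the orbit leaving $P_1$ I pass to the Poincar\'e chart $u_1 = 1/s$, $u_2 = c/s$ and rescale time by $d\tau = dt/u_1$ so that the vector field becomes regular at $u_1 = 0$. A direct computation gives a Jacobian at $P_1 = (0,0)$ with eigenvalues $0$ and $-k_1$: the zero eigenvector is $(1, e_T)$ and the $-k_1$ eigenvector lies along the equator. With the ansatz $u_2 = e_T u_1 + a u_1^2 + \cdots$ the invariance condition forces $a = -e_T(e_T + K_M)$, and the reduced flow on the center manifold takes the form
\begin{equation*}
    u_1' = (k_1 e_T^2 + k_2 e_T - k_0)\, u_1^3 + O(u_1^4).
\end{equation*}
The leading coefficient is strictly positive precisely under the hypothesis $k_2 e_T > k_0$; hence on the side $u_1 > 0$ (the upper hemisphere) the center manifold carries a unique orbit emerging from $P_1$ as $\tau\to -\infty$, tangent there to $u_2 = e_T u_1$. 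Uniqueness of the distinguished trajectory follows because any interior orbit with $\alpha$-limit $\{P_1\}$ must lie on this one-dimensional center manifold. By the preceding paragraph this orbit converges forward in time to $P_0$, and the tangency $u_2/u_1 \to e_T$ at $P_1$ translates directly into $c \to e_T$ as $s \to \infty$, yielding the asserted asymptote.

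\emph{Main obstacle and an alternative route.} The principal technical hurdle is the non-hyperbolicity of $P_1$: because one linear eigenvalue vanishes, the tangent direction and the stability of the orbit entering the first quadrant must be extracted from the nonlinear terms on the center manifold, and it is the sign of the explicit cubic coefficient $k_1 e_T^2 + k_2 e_T - k_0$ that both selects $(1, e_T)$ as the correct tangent and requires exactly the hypothesis $k_2 e_T > k_0$ (the same condition that placed $P_0$ in the first quadrant via Lemma~\ref{basicfacts}(b)). A cleaner alternative, foreshadowed in Section~\ref{sec:pp}, is to apply the antifunnel theorem to the region $\{s \ge \widehat s,\; \mathcal{N}_c(s) \le c \le \mathcal{N}_s(s)\}$: inside this strip one has $\dot s<0$ and $\dot c<0$, a check at the two walls shows the flow exits backward in time through both, the walls pinch to $c=e_T$ as $s \to\infty$ and meet at $P_0$, so the theorem delivers a unique trajectory with the asserted asymptote; matching this orbit to the unstable set of $P_1$ on the Poincar\'e sphere then closes the argument.
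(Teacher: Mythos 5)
Your strategy is the same as the paper's: Poincar\'e--Bendixson on the sphere for the convergence statement, and a local analysis of the degenerate saddle $P_1$ in the chart at infinity for the distinguished trajectory and its asymptote (the paper's Lemma~\ref{ptlem1} works in the coordinates $(x_2,x_3)=(c/s,1/s)$, i.e.\ your $(u_2,u_1)$). However, your center-manifold computation contains an arithmetic error. In your chart, after multiplying by $u_1$, the regularized field reads
\begin{align*}
u_1' &= -k_0u_1^3+k_1e_Tu_1^2-k_1u_1u_2-k_{-1}u_1^2u_2,\\
u_2' &= k_1e_Tu_1-k_1u_2-(k_{-1}+k_2)u_1u_2+k_1e_Tu_1u_2-k_1u_2^2-k_0u_1^2u_2-k_{-1}u_1u_2^2.
\end{align*}
With the ansatz $u_2=e_Tu_1+au_1^2+\cdots$ the quadratic contributions $+k_1e_Tu_1u_2$ and $-k_1u_2^2$ cancel at order $u_1^2$, and the invariance condition gives $-k_1a-(k_{-1}+k_2)e_T=0$, i.e.\ $a=-e_TK_M$, not $-e_T(e_T+K_M)$; substituting back yields $u_1'=(k_2e_T-k_0)u_1^3+O(u_1^4)$, in agreement with the paper's normal form $\dot y_3=(k_2e_T-k_0)y_3^3+\cdots$. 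Your coefficient $k_1e_T^2+k_2e_T-k_0$ is incorrect, and your claim that it is positive ``precisely'' under $k_2e_T>k_0$ is false. Under the hypothesis of this proposition both the correct and the erroneous coefficients are positive, so your qualitative conclusions (unique orbit leaving $P_1$ into $u_1>0$, tangent to $u_2=e_Tu_1$, hence $c\to e_T$) survive; but the error would break the companion case, since Proposition~\ref{psprop2} requires the coefficient to become negative exactly when $k_2e_T<k_0$.

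A second, smaller point: in the convergence argument you assert that an interior orbit cannot ``converge to a heteroclinic cycle through'' $P_1$ or $P_2$, but the justification you offer (stable set of $P_1$ on the equator, $P_2$ repelling) only rules out $\omega$-limit sets \emph{equal} to $\{P_1\}$. To exclude a larger limit set containing $P_1$, the paper invokes the Butler--McGehee theorem: such a set would meet $W^s(P_1)\setminus\{P_1\}$, which lies on the (invariant) equator, hence by closedness and invariance would contain $P_2$ or $P_3$, both repelling --- a contradiction. You should supply this step explicitly. Your proposed antifunnel alternative for the piece of the trajectory with $s>\widehat s$ is legitimate and is indeed foreshadowed in Section~\ref{sec:pp}, though as you note it must still be matched to the center-unstable manifold at $P_1$ to identify the asymptote at infinity.
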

\begin{figure}
    \centering
    \includegraphics[scale=0.9]{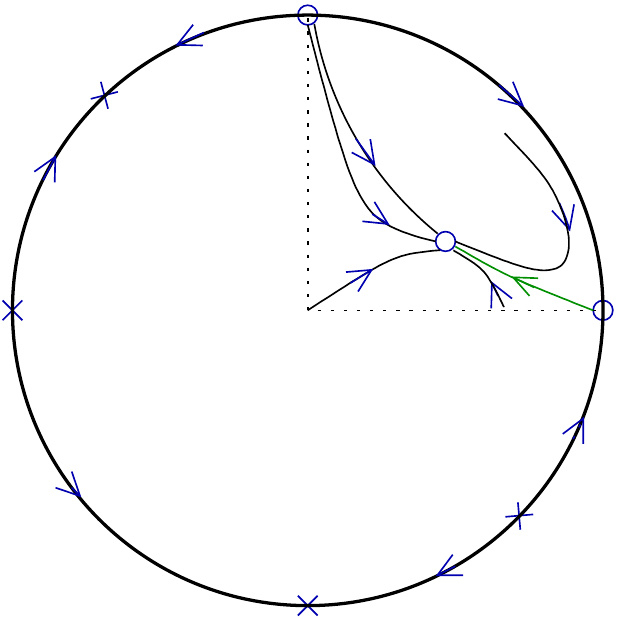}
    \caption{The system on the Poincar\'{e} Sphere for the open Michaelis--Menten reaction mechanism in case $k_2e_T>k_0$. The distinguished trajectory is colored green.
    \label{fig:PoinA}}
\end{figure}

We turn to the case when $P_0$ lies in the second quadrant; see Figure \ref{fig:PoinB}. Here, considering the system on the Poincar\'e sphere is necessary to understand the global dynamics, and moreover a proper understanding requires us to look beyond the first quadrant.
\begin{proposition}\label{psprop2}
Assume that the genericity conditions \eqref{genercond} hold, and let $k_2e_T<k_0$. Then every solution that starts in the first quadrant converges to $P_1$ as $t\to\infty$, and its trajectory in the phase plane is asymptotic to the line $c=e_T$. There is a unique distinguished trajectory which connects the saddle $P_0$ to $P_1$.
\end{proposition}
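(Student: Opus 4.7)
The plan is to work on the Poincar\'e sphere, where the closure of the first quadrant is compact, and combine the Poincar\'e--Bendixson theorem with the phase-plane information from Lemmas~\ref{basicfacts}, \ref{lemma:wedge} and~\ref{ptlem0}.

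First I would establish that every trajectory starting in the first quadrant has $\{P_1\}$ as its $\omega$-limit set on the sphere. The closed first quadrant together with its points at infinity is positively invariant, and by Lemma~\ref{basicfacts}(d) the system admits no nonconstant closed trajectory. In the case $k_2e_T<k_0$, the finite stationary point $P_0$ lies in the second quadrant (Lemma~\ref{basicfacts}(b)), so the only stationary points in the closure of the first quadrant on the sphere are $P_1$, $P_2$ and $P_3$. By Lemma~\ref{ptlem0}, $P_1$ is a degenerate attracting node, whereas $P_2$ and $P_3$ have repelling character on the upper hemisphere. The Poincar\'e--Bendixson theorem on the sphere then forces the $\omega$-limit set of any interior trajectory to be exactly $\{P_1\}$.

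Next I would prove the asymptote $c\to e_T$ using the wedge $W_1$ from Lemma~\ref{lemma:wedge}. Since $k_0>k_2e_T$, the wedge extends to $s\to\infty$, is positively invariant, and is sandwiched between the nullclines $\mathcal{N}_c(s)$ and $\mathcal{N}_s(s)$, both of which tend to $e_T$ as $s\to\infty$. On $W_1$ one has $\dot s\ge 0$, so any trajectory that enters $W_1$ runs off to $s\to\infty$ and is squeezed onto $c=e_T$. To handle initial conditions not in $W_1$, I would use the positively invariant strip $\{s\ge 0,\,0\le c\le e_T\}$ from Lemma~\ref{basicfacts}(c) and a sign analysis of $(\dot s,\dot c)$ in the two complementary regions (above $\mathcal{N}_c$ and below the positive part of $\mathcal{N}_s$) to show that every trajectory either enters $W_1$ in finite time or is already forced monotonically toward the line $c=e_T$.

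For the distinguished heteroclinic orbit, I would exploit the fact that $P_0$ is a hyperbolic saddle (Lemma~\ref{basicfacts}(b)), so its unstable manifold $W^u(P_0)$ has exactly two branches. I would compute the unstable eigenvector at $P_0$ and, using the direction of the flow on the $c$-axis together with the fact that $\widehat c=k_0/k_2>e_T$, argue that exactly one branch leaves the second quadrant by crossing into the first. The preceding two steps then force this branch to converge to $P_1$ asymptotically along $c=e_T$, and uniqueness follows from the one-dimensionality of $W^u(P_0)$.

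The main obstacle I anticipate is extracting the sharp asymptote $c=e_T$ from the merely \emph{degenerate} attracting node $P_1$: hyperbolic normal forms are not available, and one must either make the wedge-squeeze argument fully rigorous (including for trajectories that enter $W_1$ only asymptotically) or perform a blow-up at $P_1$ on the sphere to verify the direction of approach. A secondary difficulty is ensuring that the relevant branch of $W^u(P_0)$ actually enters the open first quadrant rather than being absorbed by the center manifold of the saddle-node $P_2$ first; this should follow from tracking the flow on the positive $c$-axis, but requires care.
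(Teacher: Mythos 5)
Your first step coincides with the paper's: Poincar\'e--Bendixson on the (compact) closure of the first quadrant on the sphere, exclusion of closed orbits via Lemma~\ref{basicfacts}(d), and exclusion of $P_2$ and $P_3$ as $\omega$-limit points by their repelling character, leaving $\{P_1\}$. For the asymptote $c\to e_T$ you take a genuinely different route: a squeeze between the two nullclines using the wedge $W_1$, patched by a sign analysis for orbits that never enter $W_1$. This is workable (in the complementary regions of the strip $0\le c\le e_T$ the trajectory is still pinched between $\mathcal{N}_c$ or $\mathcal{N}_s$ and the line $c=e_T$, all of which coalesce as $s\to\infty$), but it costs you several case distinctions and the verification that $s\to\infty$ rather than to a finite limit. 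The paper gets the asymptote for free from the local analysis at infinity: Lemma~\ref{ptlem1}(b) shows that in the case $k_2e_T<k_0$ all trajectories off the equator approach the degenerate node $P_1$ tangent to the line $x_2-e_Tx_3=0$, which dehomogenizes to $c=e_T$. The blow-up/normal-form computation you mention as a fallback is in fact the paper's primary argument, so your anticipated ``main obstacle'' is already resolved by the lemma you cite.

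The genuine gap is in the last step. ``Uniqueness follows from the one-dimensionality of $W^u(P_0)$'' does not follow: a one-dimensional unstable manifold has \emph{two} branches, and nothing in your argument prevents both of them from converging to $P_1$ (the second branch could re-enter the first quadrant after an excursion through the third and fourth quadrants, since approaching $P_1$ only requires $s\to\infty$, $c\to e_T$ eventually). Showing that exactly one branch crosses the positive $c$-axis does not close this, because that axis is not the only gateway into the first quadrant. The paper closes the gap topologically: each branch of $W^u(P_0)$ must have a stationary $\omega$-limit point (no homoclinic loop, since such a loop would enclose a further stationary point); the repelling points and, via Butler--McGehee, the antipode of $P_2$ are excluded, leaving only $P_1$ and the antipode of $P_3$; and the two branches cannot share the same $\omega$-limit point, since the resulting closed invariant curve would again have to enclose an additional stationary point. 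This simultaneously yields existence (one branch must reach $P_1$) without the eigenvector computation and flow-tracking on the $c$-axis that your plan relies on, and it is the step you would need to add to make your uniqueness claim rigorous.
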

\begin{figure}
    \centering
    \includegraphics[scale=0.9]{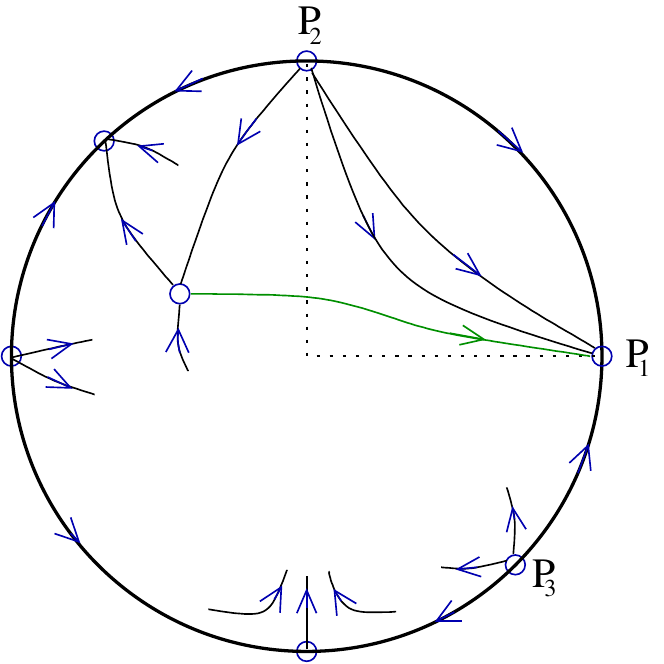}
    \caption{The system on the Poincar\'{e} Sphere for the open Michaelis--Menten reaction mechanism in case $k_2e_T<k_0$. The distinguished trajectory is colored green.
    \label{fig:PoinB}}
\end{figure}
\begin{remark}
In view of Proposition \ref{psprop2}, the mathematically distinguished trajectory connecting $P_0$ and $P_1$ may be seen as a natural candidate for a ``global''  slow manifold in appropriate parameter regimes. We provide a few more details here. From the proof (see also Figure \ref{fig:PoinB}) one finds that the two components of the unstable manifold of $P_0$ connect to $P_1$, resp.\ to the antipode of $P_3$. Solutions in the open upper hemisphere, unless they start on the stable manifold of $P_0$, converge either to $P_1$ or to the antipode of $P_3$ as $t\to\infty$. Moreover one component of the stable manifold of $P_0$ connects to $P_2$ (which is the only available alpha limit point), and the other may connect either to the antipode of $P_1$, or to the antipode of $P_2$, or to $P_3$ (topological arguments do not yield more precise information, and here we will not delve any further into this matter). In any case, the stable manifold of $P_0$ separates the regions of attraction for $P_1$ and the antipode of $P_3$ in the open upper hemisphere. In turn, the region of attraction for $P_1$ is separated by the distinguished trajectory into two subregions. For one of these subregions, the alpha limit set of all points is equal to $\{P_2\}$, thus one may briefly say that all trajectories in this region come down from $c=\infty$. For the other subregion, a similarly concise statement does not seem possible: The set of alpha limit points certainly includes $P_3$, but it may also include the antipode of $P_2$ or of $P_1$.
\end{remark}

\section{Discussion}

The open Michaelis--Menten reaction mechanism, although of definitive relevance in biochemistry, has attracted less attention than the classical closed mechanism without influx. We investigated the sQSSA for this system from two perspectives. On the one hand, we considered QSS from a singular perturbation viewpoint, determined all TFPVs from which singular perturbation reductions emanate and identified the relevant parameter values for sQSSA. On the other hand, motivated by the results of Stoleriu et al.~\ \cite{Stoleriu2004}, we started from a less restrictive notion of QSS and obtained sQSSA results by direct estimates for a wider range of parameters (such a phenomenon does not appear in the closed Michaelis--Menten system). By these estimates we obtained a justification of central results in \cite{Stoleriu2004}, and could also extend their range. Considering the fine structure of slow manifolds by analysis of higher order approximations revealed the special role (and higher accuracy of approximation) for parameters that are related to singular perturbations. Finally, we took a global perspective to investigate scenarios with no positive equilibrium.

\appendix
\section{Projecting onto a slow manifold}\label{AppendA}

This appendix is intended to give the reader a short and user-friendly overview of pertinent methods in geometric singular perturbation theory. For a more elaborate presentation, we encourage the reader to consult \cite{Fenichel1971, Fenichel1979,Goeke2014,kuehn2015,Noethen2011,Wechselberger2020}. The references \cite{Goeke2014,Noethen2011} focus specifically on the QSSA.

Singular perturbation reductions are straightforward for sufficiently smooth systems in standard form
\[
\begin{array}{rcl}
\dot x_1&=&\varepsilon f_1(x_1,x_2,\varepsilon),\\
\dot x_2&=&f_2(x_1,x_2,\varepsilon),\\
\end{array}
\]
which depend on a ``small parameter'' $\varepsilon$. We assume that the reader is familiar with this procedure, which we sketch without giving details: In slow time $\tau=\varepsilon t$ the system may be rewritten as
\[
\begin{array}{rcl}
x_1^\prime&=&f_1(x_1,x_2,\varepsilon),\\
\varepsilon x_2^\prime&=&f_2(x_1,x_2,\varepsilon).\\
\end{array}
\]
Given suitable hyperbolicity conditions, solutions of the latter system converge toward solutions of the reduced (differential-algebraic) system 
\[
x_1^\prime=f_1(x_1,x_2,0),\quad f_2(x_1,x_2,0)=0.
\]

The procedure just sketched requires an a priori separation of slow and fast variables, which is not necessarily given. This difficulty is overcome by a coordinate-free version of singular perturbation reductions as developed by Fenichel \cite{Fenichel1979} (slmost all the relevant information is contained in pp.\ 65--66 of this reference, but in rather condensed form). Here we present the basic theory and computation-relevant facts, loosely following Wechselberger \cite{Wechselberger2020}, Chapter 3, as well as \cite{Goeke2013, Goeke2014} specifically for the QSS reduction procedure.

For systems not in standard form, one must first \textit{define} the notion of a singular perturbation, according to Fenichel. Given a differential equation of the form
\begin{equation}\label{GeneralForm}
\dot{x} = F(x,\varepsilon),  \quad x \in \mathbb{R}^n, \quad F:\mathbb{R}^n \times \mathbb{R} \mapsto \mathbb{R}^n
\end{equation}
with sufficiently smooth $F$, one is interested in the dynamics in the asymptotic limit $\varepsilon \to 0+$.
The singular points of $F(x,0)$ determine the nature of the perturbation. It will be convenient to express $F$ in the form
\begin{equation}
  F(x,\varepsilon) =: h(x) + \varepsilon G(x,\varepsilon),
\end{equation}
so that there is a clear distinction between the vector field, $h(x)=F(x,0)$, and the perturbation, $\varepsilon G(x,\varepsilon)$. Let $S$ denote the set of singular points of $F(x,0)$:
\begin{equation}
S:=\{x\in \mathbb{R}^n : h(x)=0\}.
\end{equation}
Note that $S$ is an algebraic variety for polynomial or rational systems, which are quite common in reaction equations. If $S$ is the empty set, or contains only isolated singularities, then the perturbation is called \textit{regular}. In contrast, if $M\subseteq S$ is a differentiable manifold comprised of non-isolated singularities, then the perturbation is singular, and $M$ is called a \textit{critical manifold}. 

We now outline the reduction procedure in the coordinate-free setting:
\begin{enumerate}
\item
Given a singularly perturbed problem the form (\ref{GeneralForm}), one can establish necessary and sufficient conditions for the existence of a local transformation to standard form, as follows: For every $x\in M$ the required conditions are
\begin{enumerate}[(i)]
\item 
$T_xM= \ker Dh(x)=\{v\in \mathbb{R}^n: Dh(x)\cdot v=0\}.
$
\item For the eigenvalue $0$ of $Dh(x)$ the algebraic and the geometric multiplicities are equal.
\item Normal hyperbolicity: All nonzero eigenvalues of $Dh(x)$ have nonzero real parts. In applications one often requires the stronger attracting hyperbolicity condition, viz.\ all nonzero eigenvalues of $Dh(x)$ have negative real parts.
\end{enumerate}
\item Given the above conditions, one has a direct sum decomposition 
\begin{equation}
    \mathbb{R}^n = T_xM \oplus N_x, \quad \forall x \in M,
\end{equation}
where the $Dh(x)$-invariant complementary subspace $N_x={\mathcal R}(Dh(x))$ is the range of $Dh(x)$. The conditions in item 1 are necessary and sufficient for such a decomposition to exist.\\
Given this decomposition one can define the projection operator, $\Pi^M$, which for every $x\in M$ maps $\mathbb{R}^n$ onto the tangent space of $M$ at $x$ and has kernel $N_x$ (recall that a projection is uniquely determined by its kernel and image).\\
Once $\Pi^M$ is known, the leading order singular perturbation reduction is computed by projecting the perturbation term onto the tangent space of $M$ of~$x$:
\begin{equation}\label{QSS}
{x}^\prime = \Pi^M G(x,0),\quad G(x,0)\in \mathbb{R}^n, \quad x \in M.
\end{equation}
\item 
To compute $\Pi^M$ explicitly, it is useful to employ a decomposition
\begin{equation}
    h(x) = P(x)f(x),
\end{equation}
where $P(x)$ is a rectangular matrix valued function, $M$ locally coincides with the zero level set of the vector valued function $f(x)$, and $Df(x)$ has full rank when $x\in M$. The existence of such a decomposition is guaranteed by the implicit function theorem, and for polynomial or rational systems it can be obtained in an algorithmic manner.
One obtains the operator $\Pi^M$ as
\begin{equation}
 \Pi^M := I- P(Df P)^{-1}Df.
\end{equation}
\item In addition to the reduced equation, the initial value on the critical manifold $M$ is also relevant. In the attracting hyperbolic case the fast equation $\dot x =h(x)$ admits $n-\dim M$ independent first integrals in a neighborhood of $M$, and to a given initial value $z\in\mathbb R^n$ corresponds (up to a correction of order $\varepsilon$) the point where $M$ intersects the common level set of the first integrals containing~$z$; see Fenichel \cite{Fenichel1979}, Lemma 5.3, and also \cite{Goeke2014}, Proposition 2.
\end{enumerate}

Formally, the procedure resulting in (\ref{QSS}) is referred to as \textit{slow manifold projection}; see Figure \ref{fig:proj} for a geometric illustration. The flow on $M$ at $\varepsilon=0$ is trivial, but Fenichel theory ensures that the perturbed vector field has an invariant \textit{slow manifold} close to $M$, on which the flow is slow but non-trivial. The long-time evolution of $x$ is given (approximately) by the projected dynamical system \eqref{QSS}.

\begin{figure}
    \centering
    \includegraphics[scale=2.5]{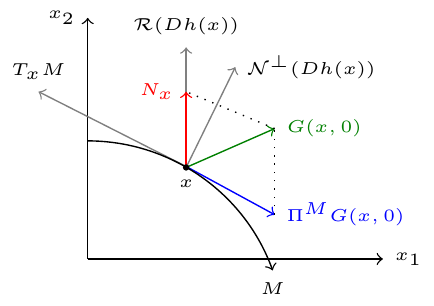}
    \caption{{Projecting onto the slow manifold.} In this figure, ``$\mathcal{R}$" denotes range and ``$\mathcal{N}$" denotes nullspace. The complementary subspaces $T_xM$ and $N_x$ are invariant with respect to the linearization $Dh(x)$, and the components of $G(x,0)\in \mathbb{R}^n$ can be uniquely expressed as $G(x,0)=u+v$, with $u\in T_xM$ and $v\in N_x$. $\Pi^{M}$ is constructed in the form of an oblique projection onto $T_xM$; note that $T_xM$ and $N_x$ are not necessarily orthogonal. The perturbed dynamical system that is influenced by the presence of $G(x,0)$ is approximated by (\ref{QSS}). Note that the critical manifold $M$ is in fact filled with non-isolated equilibria.
    \label{fig:proj}}
\end{figure}

As a first illustrating example we formally compute the singular perturbation reduction for the case of small $k_0=\varepsilon k_0^*$ and small $k_1=\varepsilon k_1^*$, thus we have the perturbation problem
\begin{equation}\label{p2}
\begin{array}{rcl}
\dot s&=& \varepsilon k_0^*-\varepsilon k_1^*( e_T-c)s + k_{-1}c,\\
\dot c&=& \varepsilon k_1^*(e_T-c)s -( k_{-1}+k_2)c.\\
\end{array}
\end{equation}
The critical manifold, $M$, attained by setting $\varepsilon=0$ in (\ref{p2}), corresponds to the $s$~axis:
\begin{equation}
    M:=\{(s,c)\in\mathbb{R}^2: c=0\}.
\end{equation}
Furthermore, $h(s,c)=P(s,c)f(s,c)$ and $G(s,c,\varepsilon)$ are given by
\begin{equation}
   P(s,c) :=\begin{bmatrix}k_{-1}\\
    -(k_{-1}+k_2)\end{bmatrix},\quad f(s,c)=c, \quad G(s,c,\varepsilon):=\begin{bmatrix}k_0^*-k_1^*(e_T-c)s\\
    k_1^*(e_T-c)s\end{bmatrix},
\end{equation}
and thus $Df=[0 \;\; 1]$. Next, $DfP$ is the scalar $-(k_{-1}+k_2)$. The product of $P$ and $Df$ is
\begin{equation}
    P Df := \begin{bmatrix} 0 & k_{-1} \\ 0 & -(k_{-1}+k_2) \end{bmatrix}.
\end{equation}
Combining the above results yields
\begin{equation}
    \Pi^M=I_{2 \times 2}+\cfrac{1}{(k_{-1}+k_2)} P Df =\begin{bmatrix} 1 & \cfrac{k_{-1}}{k_{-1}+k_2} \\ 0 & 0 \end{bmatrix},
\end{equation}
and the reduced equation is
\begin{equation}
    s' := \Pi^M|_{c=0} G(s,0,0)= k_0^*-\cfrac{k_1^*k_2e_T}{k_{-1}+k_2}s.
\label{eq:sprimek0k1}\end{equation}

It is worth pointing out that the fast system here admits the first integral: $(k_{-1}+k_2)s+k_{-1}c$. Hence, given an initial value $(s_0,c_0)$ for \eqref{p2}, the corresponding initial value for the reduced equation is just 
\[
\widetilde s_0=s_0+\frac{k_{-1}}{k_{-1}+k_2}c_0.
\]
In this example the singular perturbation reduction~(\ref{eq:sprimek0k1}) coincides with the ``classical'' QSS reduction with respect to~$c$ in the linear regime where $s\ll K_M$. This is not accidental, but due to the special form of the critical manifold; see \cite{Goeke2017}, Proposition~5.

As a second example consider the singular perturbation reduction for the case of small $k_0=\varepsilon k_0^*$ and small $k_2=\varepsilon k_2^*$, thus yielding the perturbation problem
\begin{equation}\label{p3}
\begin{array}{rcl}
\dot s&=& \varepsilon k_0^*- k_1( e_T-c)s + k_{-1}c,\\
\dot c&=& k_1(e_T-c)s -( k_{-1}+ \varepsilon k_2^*)c.\\
\end{array}
\end{equation}
Here the ``classical'' QSS reduction is significantly different from the singular perturbation reduction. The critical manifold is defined by
\begin{equation}
    M:=\{(s,c)\in\mathbb{R}^2: f(s,c)=0\}
\end{equation}
with $f(s,c):=k_1(e_T-c)s-k_{-1}c$, and
furthermore, $h(s,c)=P(s,c)f(s,c)$ with
\begin{equation}
   P(s,c) :=\begin{bmatrix}-1\\
    1\end{bmatrix}; \quad \text{moreover  }G(s,c,\varepsilon):=\begin{bmatrix}k_0^*\\
    -k_2^*c\end{bmatrix}.
\end{equation}
A routine calculation yields the reduced system
\begin{equation}
\begin{bmatrix} s^\prime\\
c^\prime\end{bmatrix} =\frac{k_0^*-k_2^*c}{k_1(e_T-c)+k_1s+k_{-1}}\begin{bmatrix}k_1s+k_{-1}\\ k_1(e_T-c)\end{bmatrix}
\end{equation}
which is relevant only on the invariant manifold $M$. Using the parameterization $c=k_1e_Ts/(k_1s+k_{-1})$ of $M$, one arrives at an equation for $s$ alone:
\begin{equation}\label{nonqss}
s^\prime=(k_1s+k_{-1})\cdot\frac{k_0^*(k_1s+k_{-1})-k_2^*k_1e_Ts}{k_1k_{-1}e_T+(k_1s+k_{-1})^2}.
\end{equation}
The reduced system without inflow is known from the literature, see e.g.\ \cite[Example 8.6]{Goeke2013}, \cite[Section 5]{Roussel2019b} or \cite[Section 3.4]{Wechselberger2020}. Note that $s+c$ is a first integral of the fast system; this may be employed to determine the initial value on $M$.

\section{Computations and proofs for the Poincar\'e sphere:  }\label{poincapp} In this Appendix, we record the necessary computations, and give proofs for Lemma \ref{ptlem0} as well as Propositions \ref{psprop1} and \ref{psprop2}.

From a computational perspective it is convenient to project the system from the sphere to another tangent plane. The procedure was streamlined (for different purposes) in \cite{Walcher2000}, and we are using it here, noting that the final result is the same as in \cite{Perko2001}:

To accommodate various transformations we rename the variables in system \eqref{mmo}, thus obtaining
\begin{equation}\label{mmox}
\begin{array}{rcl}
\dot x_1&=& k_0-k_1(e_T-x_2)x_1 + k_{-1}x_2,\\
\dot x_2&=& k_1(e_T-x_2)x_1 -( k_{-1}+k_2)x_2.
\end{array}
\end{equation}
We compute the Poincar\'e transform of this system with respect to $x_1$ (terminology from \cite{Walcher2000}), which corresponds to the transformed system on the tangent plane to the ``east pole'' (compare Perko \cite{Perko2001}, Section 3.10, Theorem 2).
\begin{itemize}
\item In a first step introduce a further variable $x_3$ and homogenize, to obtain
\[
\begin{array}{rclcl}
\dot x_1&=& k_0x_3^2-k_1e_Tx_1x_3+k_1x_1 x_2 + k_{-1}x_2x_3 &=:&g_1,\\
\dot x_2&=& k_1e_Tx_1x_3-k_1x_1x_2 -( k_{-1}+k_2)x_2x_3 &=:&g_2.
\end{array}
\]
\item In step 2, compute the projected system
\[
\begin{array}{rcl}
\dot x_2&=& -x_2g_1+x_1g_2,\\
\dot x_3&=&-x_3g_1.
\end{array}
\]
\item In step 3, dehomogenize by setting $x_1=1$, to obtain
\begin{equation}\label{mmopt1}
\begin{array}{rcl}
\dot x_2&=& -k_1x_2+k_1e_Tx_3-k_1x_2^2+(k_1e_T-k_{-1}-k_2)x_2x_3-k_{-1}x_2^2x_3-k_0x_2x_3^2,\\
\dot x_3&=& -x_3\left(k_1x_2-k_1e_Tx_3+k_{-1}x_2x_3+k_0x_3^2\right).
\end{array}
\end{equation}
with the equator corresponding to $x_3=0$.
\end{itemize}

\begin{lemma}\label{ptlem1}Assume that the genericity conditions \eqref{genercond} are satisfied. Then the following hold:
\begin{enumerate}[(a)]
\item System \eqref{mmopt1} admits two stationary points on the line $x_3=0$. These are $(0,\,0)$, corresponding to $P_1$ for the homogeneous quadratic part of system \eqref{mmox}, and $(-1,\,0)$ which corresponds to  $P_3$.
\item At the stationary point $(0,\,0)$ the Jacobian is 
\[
\begin{bmatrix} -k_1 & k_1e_T\\ 0&0\end{bmatrix}.
\]
In case $k_2e_T>k_0$ this point is a degenerate saddle with the equator as local stable manifold and a center-unstable manifold tangent to the line $x_2-e_Tx_3=0$. In case $k_2e_T<k_0$ this point is a degenerate attracting node with all trajectories but the two on the equator approaching it tangent to the line $x_2-e_Tx_3=0$.
\item At the stationary point $(-1,\,0)$ (which is irrelevant for the dynamics on the first quadrant) the Jacobian is 
\[
\begin{bmatrix} k_1 & k_2\\ 0&k_1\end{bmatrix},
\]
hence this point is a repelling node.
\end{enumerate}
\end{lemma}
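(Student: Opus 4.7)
The lemma reduces to three essentially independent local computations at equatorial points of the chart \eqref{mmopt1}.

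For part (a), I would set $x_3=0$ in \eqref{mmopt1}. The second equation yields $\dot x_3\equiv 0$, while the first collapses to $\dot x_2=-k_1 x_2-k_1 x_2^2=-k_1 x_2(1+x_2)$, whose zeros are $x_2=0$ and $x_2=-1$. The Poincar\'e $x_1$-transform sends the ray $\mathbb R_+\cdot(1,\alpha)^T$ to the equatorial point $(x_2,x_3)=(\alpha,0)$, so $(0,0)$ corresponds to $\mathbb R_+\cdot(1,0)^T=P_1$ and $(-1,0)$ to $\mathbb R_+\cdot(1,-1)^T=P_3$. Part (c) is immediate: a direct partial differentiation of the right-hand sides of \eqref{mmopt1} at $(x_2,x_3)=(-1,0)$ yields the upper-triangular matrix $\bigl[\begin{smallmatrix} k_1 & k_2\\ 0 & k_1\end{smallmatrix}\bigr]$, whose only eigenvalue is $k_1>0$ with a nonzero off-diagonal entry, producing a repelling improper node.

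The substantive content is part (b). The Jacobian at $(0,0)$ is read off the linear part of \eqref{mmopt1} and matches the stated matrix; its eigenvalues are $-k_1$ and $0$, with eigenvectors $(1,0)^T$ (stable; lying in the equator $x_3=0$) and $(e_T,1)^T$ (center; tangent to the line $x_2-e_T x_3=0$). The point is thus semi-hyperbolic and the classification requires a center-manifold reduction. I would change coordinates to $u=x_2-e_T x_3$, $v=x_3$, diagonalising the linear part. Expanding \eqref{mmopt1} in $(u,v)$ and collecting terms gives
\begin{equation*}
\dot u=-k_1 u-k_1 u^2-(k_{-1}+k_2)uv-(k_{-1}+k_2)e_T v^2+O_3,
\end{equation*}
\begin{equation*}
\dot v=-k_1 uv-k_{-1}uv^2-(k_{-1}e_T+k_0)v^3,
\end{equation*}
where $O_3$ denotes cubic and higher terms in $(u,v)$. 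Seeking the center manifold in the form $u=h(v)=av^2+O(v^3)$ and using $\dot u=h'(v)\dot v=O(v^4)$, the $v^2$-coefficient equation forces $a=-(k_{-1}+k_2)e_T/k_1$. Substituting into $\dot v$ produces the decisive cancellation
\begin{equation*}
\dot v=\bigl((k_{-1}+k_2)e_T-(k_{-1}e_T+k_0)\bigr)v^3+O(v^4)=(k_2 e_T-k_0)\,v^3+O(v^4),
\end{equation*}
so the sign of $k_2 e_T-k_0$ governs the center dynamics. When $k_2 e_T>k_0$ the origin is repelling on the center manifold, which together with the stable equatorial direction gives the degenerate saddle; when $k_2 e_T<k_0$ the origin is attracting on the center manifold, and since the center direction is the slow one, trajectories off the equator approach $(0,0)$ tangent to $u=0$, yielding the degenerate attracting node.

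The main obstacle is the bookkeeping in the center-manifold reduction: one must track every quadratic contribution to $\dot u$ and every cubic contribution to $\dot v$ through the coordinate change, and the clean expression $k_2 e_T-k_0$ only emerges after the quadratic ansatz coefficient $a$ is determined and substituted. Once the reduction is in hand, the classification of the two equatorial equilibria is standard from center-manifold theory.
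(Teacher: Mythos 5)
Your proof is correct and follows essentially the same route as the paper: parts (a) and (c) by direct computation, and for (b) the same linearizing change of coordinates $u=x_2-e_Tx_3$, $v=x_3$ followed by reduction to the one-dimensional center dynamics, arriving at the same decisive cubic coefficient $k_2e_T-k_0$. The only difference is presentational: you carry out the center-manifold reduction explicitly via the ansatz $u=av^2+O(v^3)$, whereas the paper reads off the reduced equation from a quoted normal-form-on-invariant-manifold result of \cite{Walcher1993} with the corresponding coefficient identifications.
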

\begin{proof}
\begin{enumerate}[(i)]
    \item The stationary points with $x_3=0$ are determined from the equation $-k_1(x_2+x_2^2)=0$; thus part (a) follows. Computing the Jacobians is straightforward, and part (c) as well as the first statement of (b) follows. To prepare for proving the remaining statements, introduce new coordinates $y_2= x_2-e_Tx_3$ and $y_3=x_3$ to obtain
\[
\begin{array}{rcccl}
\dot y_2&=&-k_1y_2&-&(k_{-1}+k_2)e_Ty_3^2+\cdots \\
\dot y_3&=&           &-& k_1y_2y_3-(k_{-1}e_T+k_0)y_3^3+\cdots
\end{array}
\]
with diagonalized linear part.
\item 
The following auxiliary result is a special case from the last section of \cite{Walcher1993}:
Consider a system
\begin{equation}\label{nfim2d}
\begin{array}{rcccccl}
\dot z_1&=&  & &\alpha_{111}z_1^2+2 \alpha_{112}z_1z_2+\alpha_{122}z_2^2&+&\gamma_{1111}z_1^3+\cdots \\
\dot z_2&=& \lambda z_2&+& \alpha_{211}z_1^2+2 \alpha_{212}z_1z_2+\alpha_{222}z_2^2&+&\cdots
\end{array}
\end{equation}
with real parameters, and $\lambda\not=0$. Then the normal form on an invariant manifold (NFIM) tangent to $z_2=0$, up to degree three, is given by
\begin{equation}\label{nfim1d}
\dot z_1=\alpha_{111}z_1^2+\left(\gamma_{1111}-2\alpha_{112}\alpha_{211}/\lambda\right)z_1^3+\cdots
\end{equation}
In case $\alpha_{111}\not=0$ the stationary point $0$ of system \eqref{nfim2d} is a saddle-node. In case $\alpha_{111}=0$ and $\lambda<0$ the stationary point $0$ is a degenerate saddle when $\gamma_{1111}-2\alpha_{112}\alpha_{211}/\lambda>0$ and a degenerate attracting node when $\gamma_{1111}-2\alpha_{112}\alpha_{211}/\lambda<0$. In the latter case all but two trajectories are tangent to $z_2=0$.
\item Applying this result with the identifications $z_1=y_3$, $z_2=y_2$, $\alpha_{111}=0$, $2\alpha_{112}=-k_1$, $\gamma_{1111}=-(k_{-1}e_T+k_0)$ and $\alpha_{211}=-(k_{-1}+k_2)e_T$ one obtains the NFIM up to degree three as
\[
\dot y_3=(k_2e_T-k_0)y_3^3+\cdots,
\]
and all assertions follow.
\end{enumerate}

\end{proof}
There remains to discuss the stationary point $P_2$ at infinity, for which we use the Poincar\'e transform of \eqref{mmox} with respect to $x_2$. The first step is unchanged but the projection in the second step is now given as
\[
\begin{array}{rcl}
\dot x_1&=&-x_1g_2+x_2g_1,\\
\dot x_3&=& -x_3 g_2
\end{array}
\]
and dehomogenization $x_2=1$ yields 
\begin{equation}\label{mmopt2}
\begin{array}{rcl}
\dot x_1&=& -k_1(x_1+x_1^2)+k_{-1}x_3+(k_{-1}+k_2+k_1e_T)x_1x_3+k_0x_3^2-k_1e_Tx_1^2x_3,\\
\dot x_3&=& -x_3\left(k_1e_Tx_1x_3-k_1x_1 -(k_{-1}+k_2)x_3\right).
\end{array}
\end{equation}
There are two stationary points with $x_3=0$. The point $(-1,0)$ corresponds to $P_3$, which has been taken care of. There remains $(0,0)$, corresponding to $P_2$.
\begin{lemma}\label{ptlem2} 
Assume that the genericity conditions \eqref{genercond} hold. Then the stationary point $P_2$ at infinity is a saddle-node, with a repelling node part on the upper hemisphere. Except for the trajectories on the equator, all trajectories of \eqref{mmopt2} with positive $x_3$ that emanate from this stationary point are tangent to the line given by $k_1x_1+k_{-1}x_3=0$.
\end{lemma}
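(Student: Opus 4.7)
The strategy is to follow the same normal-form machinery used in the proof of Lemma \ref{ptlem1}(b): diagonalise the linear part at the stationary point and apply the NFIM result \eqref{nfim1d}. Linearising \eqref{mmopt2} at $(x_1,x_3)=(0,0)$ produces a Jacobian of the form
\[
J=\begin{pmatrix} a & k_{-1}\\ 0 & 0\end{pmatrix},
\]
whose nonzero entry $a$ comes from the linear part of $\dot x_1$ (the $\dot x_3$ row has an overall factor $x_3$, so there is no linear contribution there). Under \eqref{genercond} the nonzero eigenvalue $\lambda=a$ is of order $k_1$, and its eigenvector lies along $x_3=0$, confirming that the equator is the hyperbolic invariant line at $P_2$. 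The zero eigenvector determines the tangent to the centre eigenline, and a direct identification gives the line $k_1 x_1+k_{-1}x_3=0$ of the statement.

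Next, introduce linear coordinates $z_1=x_3$ and $z_2$ equal to a shear of $x_1,x_3$ aligning $z_2=0$ with the centre eigenline (that is, annihilating the off-diagonal entry $k_{-1}$ in $J$), putting the system into the NFIM form \eqref{nfim2d}. A mechanical Taylor expansion exposes the coefficients $\alpha_{111},\alpha_{112},\alpha_{211},\gamma_{1111},\ldots$. The decisive quantity is $\alpha_{111}$, the coefficient of $z_1^2$ in $\dot z_1=\dot x_3$. Since $\dot x_3$ in \eqref{mmopt2} begins with $(k_{-1}+k_2)x_3^2+k_1 x_1 x_3+\text{cubic}$, and the substitution $x_1\propto z_1$ along $z_2=0$ contributes an additional $k_{-1}$-multiple of $z_1^2$, the coefficient $\alpha_{111}$ collects as a strictly positive linear combination of $k_2$ and $k_{-1}$, and in particular is nonzero by \eqref{genercond}.

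With $\alpha_{111}\neq 0$, the NFIM formula \eqref{nfim1d} immediately classifies $(0,0)$ as a saddle-node, while the reduced dynamics $\dot z_1=\alpha_{111}z_1^2+O(z_1^3)$ on the centre manifold pushes $z_1=x_3>0$ away from the origin; combined with the sign of $\lambda$ responsible for repulsion along the hyperbolic eigendirection on the upper hemisphere, this shows that $P_2$ presents a repelling node face to $x_3>0$. Every trajectory emanating from $(0,0)$ with $x_3>0$ other than the one confined to the equator therefore converges tangentially to the invariant manifold $z_2=0$, whose preimage in the original chart is the line $k_1 x_1+k_{-1}x_3=0$. I expect the main effort to be the Taylor-coefficient bookkeeping required to verify that $\alpha_{111}$ has the correct nonzero sign; conceptually the argument is actually simpler than the one used for $P_1$, since there the quadratic coefficient vanished and one had to pass to the cubic correction $\gamma_{1111}-2\alpha_{112}\alpha_{211}/\lambda$, whereas here $\alpha_{111}$ itself settles the type.
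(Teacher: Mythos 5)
You follow the same route as the paper's proof: shear the coordinates to diagonalize the degenerate Jacobian at the origin of \eqref{mmopt2}, then invoke the NFIM result \eqref{nfim2d}--\eqref{nfim1d}, with the saddle-node classification resting on $\alpha_{111}\neq 0$. Two concrete points in your bookkeeping do not hold up, however. The first concerns the decisive coefficient. On the centre eigenline $k_1x_1+k_{-1}x_3=0$ one has $x_1=-(k_{-1}/k_1)x_3$, so the term $k_1x_1x_3$ in $\dot x_3$ contributes $-k_{-1}x_3^2$, which \emph{cancels} the $k_{-1}$ coming from $(k_{-1}+k_2)x_3^2$; the coefficient is $\alpha_{111}=k_2$ exactly, not ``a strictly positive linear combination of $k_2$ and $k_{-1}$.'' The sign slip is harmless for the final conclusion only because \eqref{genercond} includes $k_2\neq 0$, but it misidentifies the source of nondegeneracy: it is precisely $k_2\neq 0$ that makes $P_2$ a saddle-node, and $k_{-1}$ (which is allowed to vanish) plays no role.

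The second point is the sign of the nonzero eigenvalue $\lambda$, which you assert rather than compute, and which is exactly what the claim ``repelling node part on the upper hemisphere'' hinges on: with $\lambda<0$ and $\alpha_{111}>0$ the upper-hemisphere part would be a saddle, not a node. Note that \eqref{mmopt2} as printed has $\dot x_1=-k_1(x_1+x_1^2)+k_{-1}x_3+\cdots$, which would give $\lambda=-k_1<0$ and kernel line $k_1x_1-k_{-1}x_3=0$, contradicting the tangent line in the statement; redoing the chart computation shows that the linear part of $\dot x_1$ is in fact $k_1x_1+k_{-1}x_3$, so $\lambda=k_1>0$ and the kernel is $k_1x_1+k_{-1}x_3=0$ as claimed. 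You implicitly adopt this corrected sign when you identify the centre eigenline, but without pinning down $\lambda=+k_1$ explicitly the node/saddle dichotomy on the upper hemisphere is not established. With these two corrections your argument coincides with the paper's: setting $y_1=x_1+(k_{-1}/k_1)x_3$, $y_3=x_3$ gives $\dot y_1=k_1y_1+\cdots$ and $\dot y_3=k_2y_3^2+k_1y_1y_3+\cdots$, and the NFIM on $y_1=0$ is $\dot y_3=k_2y_3^2+\cdots$, from which all assertions follow.
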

\begin{proof} To diagonalize the Jacobian at $(0,0)$, introduce new coordinates $y_1=x_1+k_{-1}/k_1\cdot x_3$, $y_3=x_3$ to obtain the system
\[
\begin{array}{rcl}
\dot y_1&=& k_1y_1+\cdots\\
\dot y_3&=& k_2y_3^2+k_1y_1y_3+\cdots
\end{array}
\]
with the dots denoting terms of higher order. By the result quoted in the proof of Lemma \ref{ptlem1}, the NFIM up to degree two on $y_1=0$ is given by $\dot y_3=k_2y_3^2+\cdots$, and all assertions follow.
\end{proof}
We note that Lemma \ref{ptlem0} is thus proven. We turn to the Propositions.

\begin{proof}[Proof of Proposition \ref{psprop1}]
By Poincar\'e-Bendixson and Lemma \ref{basicfacts}, the omega limit set of every solution starting  in the first quadrant must contain a stationary point. By Lemma \ref{ptlem2}, $P_2$ is a saddle-node with a repelling saddle part in the upper hemisphere, so $P_2$ cannot be an omega limit point. By Lemma \ref{ptlem1} the point $P_1$ is a saddle with stable manifold on the equator, therefore an omega limit set containing $P_1$ cannot consist of $P_1$ alone. By the Butler-McGehee theorem (see e.g.\ Smith and Waltman \cite{SmithWaltman1995}), the omega limit set has nonempty intersection with the stable manifold of the saddle, and by invariance and closedness it must contain $P_2$ or $P_3$; a contradiction since both these points are repelling. So, only $P_0$ remains, and by attractivity of $P_0$ the solution converges toward this point. The last two assertions are concerned with the center-unstable manifold of $P_1$, and are a consequence of Lemma \ref{ptlem1}(b), since dehomogenizing $x_2-e_Tx_3$ with respect to $x_3$ yields $x_2-e_T$.
\end{proof}

\begin{proof}[Proof of Proposition \ref{psprop2}] By Lemmas \ref{ptlem1} and \ref{ptlem2}, and by properties of antipodal points for systems of even degree, the stationary points at infinity are $P_1$ (attracting node), its antipode (repelling node), $P_2$ (repelling node part for upper hemisphere), its antipode (saddle part for the upper hemisphere, with stable manifold on the equator), $P_3$ (repelling node) and its antipode (attracting node).

The first two statements follow from Lemma \ref{ptlem1} and Poincar\'e-Bendixson theory, similar to the previous case. For the last statement, consider the two local components of the unstable manifold of $P_0$. Such a component cannot connect to a component of the stable manifold, since the existence of a homoclinic orbit would imply the existence of a further stationary point. Therefore the omega limit set of a point on the unstable manifold must contain a different stationary point. This point cannot be a repelling node, which excludes $P_3$ or the antipode of $P_1$, and it cannot be $P_2$, which is repelling for the upper hemisphere. Finally, it cannot be the antipode of $P_2$ with saddle part in the upper hemisphere, by arguments analogous to those in the previous proof, invoking the Butler-McGehee theorem. Therefore, the omega limit set of a point in the unstable manifold of $P_0$ contains a single point, which is either $P_1$ or the antipode of $P_3$. Finally, not both local components of the unstable manifold can have the same point as omega limit point; this again would imply the existence of a further stationary point.

\end{proof}
\begin{remark}
In case $k_2e_T=k_0$, system \eqref{mmo} admits no finite stationary point, and for the sake of completeness we record the pertinent result about this setting. By routine (albeit lengthy) computations one finds that
in case $k_2e_T=k_0$, $k_1\not=0$ and $k_2\not=0$ the NFIM of \eqref{mmopt1} at $0$ up to degree four is given by
\[
\dot y_3=-\frac{(k_{-1}+k_2)k_2e_T}{k_1}y_3^4+\cdots,
\]
hence the stationary point $P_1$ is a degenerate attracting node. In the global picture, $P_1$ thus attracts all solutions starting in the first quadrant.
\end{remark}




\providecommand{\href}[2]{#2}
\providecommand{\arxiv}[1]{\href{http://arxiv.org/abs/#1}{arXiv:#1}}
\providecommand{\url}[1]{\texttt{#1}}
\providecommand{\urlprefix}{URL }

\end{document}